\newtheorem{thm}{Theorem}[section]
\newtheorem{cor}[thm]{Corollary}
\newtheorem{prop}[thm]{Proposition}
\newtheorem{lem}[thm]{Lemma}
\newtheorem*{claim*}{Claim}
\newtheorem{problem}[thm]{Problem}
\newtheorem*{openproblem*}{Problem}
\newtheorem*{quest*}{Question}
\newtheorem*{problem*}{Problem}
\theoremstyle{definition}
\newtheorem{defn}[thm]{Definition}
\theoremstyle{remark}
\newtheorem{rem}[thm]{Remark}
\newcommand{\bH}{\mathbb{H}}
\newcommand{\bN}{\mathbb{N}}
\newcommand{\bQ}{\mathbb{Q}}
\newcommand{\bR}{\mathbb{R}}
\newcommand{\bZ}{\mathbb{Z}}
\newcommand\Diff{\mathrm{Diff}}
\newcommand\Homeo{\mathrm{Homeo}}
\newcommand\BDiff{\mathrm{BDiff}}
\newcommand{\hcoker}{/\!\!/}
\newcommand{\tH}{\text{\textnormal{Homeo}}}
\newcommand{\BH}{\mathrm{B}\text{\textnormal{Homeo}}}
\renewcommand{\paragraph}[1]{\medskip \noindent {\bf #1.}}
\newcounter{notes}
\let\c@equation\c@thm
\numberwithin{equation}{section}
\title{On the bordism group for group actions on the torus}
\author{Kathryn Mann}
\address{Department of Mathematics\\
Cornell University\\
Ithaca, NY, 14850 }
\email{k.mann@cornell.edu}
\author{Sam Nariman}
\address{Department of Mathematics\\
  Purdue University\\
150 N. University Street\\
West Lafayette, IN 47907-2067\\
}
\email{snariman@purdue.edu}
\begin{document}

\begin{abstract}
In this short note, we study the bordism problem for group actions on the torus and give examples of groups acting on the torus by diffeomorphisms isotopic to the identity that cannot be extended to an action on a bounding 3-manifold.   This solves a question raised in the previous work of the authors.  
\end{abstract}
\maketitle

\section{Introduction}

In \cite{Browder}, Browder introduced the notion of (oriented) bordism for diffeomorphisms.  
Two orientation preserving diffeomorphisms $f_i: M_i \to M_i$ of closed, oriented $n$-manifolds $M_i$ are {\em bordant} if there is an oriented bordism $W$ between $M_1$ and $M_2$ and an orientation preserving diffeomorphism $H: W \to W$ that restricts to $f_i$ on $M_i$.  
Bordism classes of diffeomorphism groups on $n$-manifolds form an abelian group denoted $\Delta_n$ (sometimes written $\Delta_{n+}$ to emphasize orientation).  
Kreck (see \cite{Kreck}) computed these for $n>3$, shortly after this Melvin \cite{Melvin} showed $\Delta_3 = 0$, and the group $\Delta_2$ was eventually computed by Bonahon \cite{Bonahon}  using work of Scharelmann.  

Viewing a diffeomorphism as a $\bZ$-action, Browder's definition readily generalizes to other groups: 

\begin{defn} \label{bordism def}
For $n$-manifolds $M_1$ and $M_2$ and a discrete group $\Gamma$, two homomorphisms $\rho_1, \rho_2: \Gamma \to \Diff(M_i)$ are \emph{bordant} if there is a $(n+1)$-manifold $W$ and a representation $\phi: \Gamma \to \Diff(W)$ such that $\partial W = M_1 \sqcup -M_2$ and such that the restriction of $\phi(\gamma)$ to $M_i$ agrees with $\rho_i(\gamma)$ for each $\gamma \in \Gamma$.  
\end{defn}
Bordism classes of group actions on $n$-manifolds form an abelian group $\Delta(n, \Gamma)$ under disjoint union.  
As in Browder, we will assume going forward that all manifolds orientable and all diffeomorphisms are orientation preserving.  

Here we study the role that the algebraic structure of $\Gamma$ plays in the structure of the group $\Delta(n, \Gamma)$ rather than the role played by the topology of the manifold on which it acts.  To focus on this algebraic aspect, we wish to require of our actions of $\Gamma$ on $M_i$ to have the property that every individual element of $\Gamma$ extends to act on every manifold $W$ bounded by $M_1 \sqcup M_2$.   A simple way to ensure this is to take actions by isotopically trivial diffeomorphisms: requiring that $\rho_i(\Gamma)$ lies in the identity component $\Diff_0(M_i)$ of $\Diff(M_i)$, the action of each individual element may be smoothly isotoped to the identity along a small collar neighborhood of $M_i$ in $W$, defining a diffeomorphism of $W$ supported in a neighborhood of the boundary.    This parallels the framework set up by Ghys in \cite{Ghys} and studied further in previous work of the authors \cite{MannNariman} on obstructions to extending isotopically trivial actions on some manifold $M$ to a manifold $W$ with $\partial W = M$.  However, neither of these works furnishes an example of such a group $\Gamma$ such that $\Delta(n, \Gamma)$ is nontrivial for any $n >1$.  This is the primary motivation of this note.   We exhibit an example of such a group, and give both geometric-topological tools and cohomological tools to further pursue the bordism problem.  

\subsection*{Results} 
Let $G' \subset \Diff_0(S^1)$ be the smooth conjugate of the standard action of Thompson's group constructed by Ghys and Sergiescu in \cite{MR896095}.  The notation $G'$ is adopted from their work.   Since $G'$ is well known to be finitely generated, the following gives the first example of a finitely generated group acting on a 2-manifold with nontrivial bordism group.  
\begin{thm}\label{Thompson}
Let $\Gamma = G' \times G'$.  The product action $\rho: G' \times G' \to \Diff_0(S^1 \times S^1)$ is a nontrivial element of $\Delta_{(\Gamma,2)}$.
\end{thm}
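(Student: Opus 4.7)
The plan is to exhibit a cohomological invariant, built from Euler classes of the two Ghys--Sergiescu factors, that is nonzero on $\rho$ but must vanish on any action extending to a bordism. By Ghys--Sergiescu, the Euler class $e$ of the standard action $G'\hookrightarrow \Diff_0(S^1)$ is a nonzero element of $H^2(BG';\bZ)$. Via the Earle--Eells equivalence $\Diff_0(T^2)\simeq T^2$ and the resulting identification $B\Diff_0(T^2)\simeq BS^1\times BS^1$, there are two tautological Euler classes whose pullbacks to $B\Gamma$ under the classifying map of $\rho$ are classes $e_1, e_2 \in H^2(B\Gamma;\bZ)$. By the K\"unneth formula both are nonzero, and in fact no nontrivial integral linear combination $p\,e_1+q\,e_2$ vanishes in $H^2(B\Gamma;\bZ)$.

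I would then argue by contradiction. Suppose $\rho$ is null-bordant via $\phi\colon \Gamma\to\Diff(W^3)$ with $\partial W=T^2$. Then the classifying map $B\Gamma\to B\Diff_0(T^2)^\delta$ factors through the boundary-restriction map $B\Diff(W)^\delta\to B\Diff_0(T^2)^\delta$, and the goal is to show that such a factorization forces some relation $p\,e_1+q\,e_2=0$ in $H^2(B\Gamma;\bZ)$, contradicting the K\"unneth statement above. The input from $3$-manifold topology is the standard ``half-lives-half-dies'' lemma, which produces a primitive class $\gamma = p[\alpha]+q[\beta]\in H_1(T^2;\bZ)$ with $\gcd(p,q)=1$ bounding rationally in $W$.

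The hard part will be converting this homological vanishing on $W$ into the desired cohomological identity on $B\Gamma$. One route I would try is to work with the Serre spectral sequence of the flat $W$-bundle $E_W\to B\Gamma$ alongside that of its boundary $\partial E_W$: in the latter, the transgressions of the generators dual to $[\alpha]$ and $[\beta]$ are precisely $e_1$ and $e_2$, and the existence of an extension to $E_W$ should force the transgression of the bounding combination to vanish. A complementary route is a Thurston-stability-type argument: using that $G'$ is simple and hence perfect, one controls the holonomy of $\phi$ near a bounding surface for $\gamma$ in $W$ and extracts the same relation. In either implementation, the central difficulty is bridging the rational-homology bordism on $W$ with integral cohomology of $B\Gamma$ for a flat-bundle classifying map, and this is where the algebraic rigidity of Thompson-like groups should play the decisive role.
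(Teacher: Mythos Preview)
Your setup is fine: by Ghys--Sergiescu and K\"unneth, no nontrivial integral combination $p\,e_1+q\,e_2$ vanishes in $H^2(B\Gamma;\bZ)$.  The gap is in the mechanism you propose for producing such a relation from an extension to $W$.  The claim that ``the existence of an extension to $E_W$ should force the transgression of the bounding combination to vanish'' is false.  Concretely, take $W=D^2\times S^1$ with an action of a finite product $\bZ/n\times\bZ/m\subset\Diff_0(S^1)\times\Diff_0(S^1)$ rotating each factor.  The meridian $[\alpha]$ bounds in $W$, so by your half-lives-half-dies step the class $b\in H^1(T^2)$ (dual to $[\beta]$) is exactly the one extending to $H^1(W)$.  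But in the Serre spectral sequence of the flat $T^2$-bundle one has $d_2(b)=e_2$, which is nonzero in $H^2(B(\bZ/n\times\bZ/m);\bZ)$; naturality only identifies this with $d_2^W(\tilde b)$, which has no reason to vanish and indeed does not.  So neither ``the transgression of the extending class vanishes'' nor ``the transgression of the class dual to the bounding curve vanishes'' is a valid principle, and no linear relation among $e_1,e_2$ can be extracted this way.  Your alternate ``Thurston-stability-type'' route is not developed enough to evaluate, but note that it would have to survive exactly this solid-torus test case.

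The paper's proof is structurally quite different.  It first uses torsion in $G'$ together with geometrization and an analysis of finite-order diffeomorphisms of geometric $3$-manifolds to force any bounding $M$ to be a solid torus (this is Theorem~\ref{torsion}, which does not use Euler classes at all).  The solid torus is then ruled out by a hands-on argument (Proposition~\ref{prop:solid torus}) that exploits perfectness of $G'$ and its good behavior under finite covers to write a high-order rotation as a product of commutators commuting with a suitable power, and then applies a fixed-point/centralizer trick in the spirit of Ghys.  The paper does have a cohomological route (Theorem~\ref{main1}), but it is of a different flavor than yours: it uses \emph{high powers} $x^k$ of the Euler classes together with the Hatcher--McCullough finiteness of $\mathrm{Mod}(M,\partial_0)$, it requires $M$ irreducible, and it still leaves the solid torus (and the reducible case) to be treated separately.
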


As in \cite{MannNariman}, we give two independent approaches to this problem: one using geometric topology, and one using cohomology of diffeomorphism groups.  
The geometric approach relies heavily on torsion elements.  The key tool is the following theorem, its proof uses the geometrization theorem and an analysis of finite order diffeomorphisms of geometric manifolds.  

\begin{thm}\label{torsion}
Let $M$ be a $3$-manifold with $\partial M\cong T^2$, and suppose $G_1, G_2$ are subgroups of $\Diff_0(S^1)$ containing torsion elements of arbitrarily high order.  
 If the action of $G_1 \times G_2$ on $S^1 \times S^1 = T^2$ extends to a smooth action on $M$, then $M$ is diffeomorphic to a solid torus. 
\end{thm}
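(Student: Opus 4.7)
The plan is to exploit the abundance of commuting finite-order elements via equivariant geometrization. Given torsion $f_n \in G_1$ of order $n$ and $g_m \in G_2$ of order $m$, the elements $(f_n,e)$ and $(e,g_m)$ act on $T^2$ as rigid rotations in the two factors, since any finite-order element of $\Diff_0(S^1)$ is smoothly conjugate to a rotation. Their extensions $F_n,G_m \in \Diff(M)$ commute, have orders $n$ and $m$, and act freely on $\partial M$, giving a finite abelian subgroup of $\Diff(M)$ of arbitrarily large order with very rigid boundary behavior.

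First I would apply equivariant geometrization for finite group actions on compact $3$-manifolds (Dinkelbach--Leeb, building on Thurston's orbifold theorem and Perelman's geometrization) to the action of $\langle F_n,G_m\rangle$, producing a JSJ decomposition of $M$ preserved setwise by the group. Passing to a bounded-index subgroup, I may assume each JSJ piece is individually preserved while keeping the orders arbitrarily large. I would then rule out hyperbolic pieces: any finite-order diffeomorphism of a finite-volume hyperbolic $3$-manifold is conjugate to an isometry, and the isometry group of such a manifold is finite by Mostow rigidity. Hence the order of $F_n$ on any hyperbolic piece is uniformly bounded, contradicting $n \to \infty$. So every JSJ piece is Seifert fibered.

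The final step is to analyze the piece $N$ containing $\partial M$. Its Seifert fibration is essentially unique (Waldhausen), so $F_n$ and $G_m$ preserve it up to isotopy and descend to commuting actions on the base $2$-orbifold $B$. If the fiber has slope $(p,q)$ on $\partial M$, these descents rotate the boundary circle $\partial_0 B$ corresponding to $\partial M$ with orders $n/\gcd(n,q)$ and $m/\gcd(m,p)$, both unbounded. A compact $2$-orbifold with boundary admitting commuting rotations of arbitrarily high order fixing a boundary circle must be a disk with at most one cone point at the center, which gives $N \cong D^2 \times S^1$. Since such $N$ has $\partial M$ as its only boundary torus, no further JSJ piece is glued to $N$, and $M = N$ is a solid torus. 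The hard part will be this last combinatorial analysis of base $2$-orbifolds of Seifert pieces compatible with the prescribed high-order rotational symmetry on $\partial M$, including ruling out more exotic Seifert pieces with multiple boundary tori that could occur as JSJ components and require an inductive argument across the JSJ graph.
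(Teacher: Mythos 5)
Your skeleton matches the paper's strategy (geometrize equivariantly, kill hyperbolic pieces by Mostow rigidity, then analyze the Seifert piece at the boundary), but there are two genuine gaps. The first is the reducible case: the theorem does not assume $M$ irreducible, and a JSJ decomposition only exists for irreducible manifolds, so your concluding step ``$M = N$ is a solid torus'' silently rules out connected summands. If $M \cong (D^2\times S^1)\,\#\,Q$ for a closed irreducible $Q$, your entire analysis of the piece containing $\partial M$ goes through and produces a solid torus, yet $M$ is not a solid torus. The paper deals with this by first passing to the prime summand $M'$ containing the boundary (equivariant sphere theorem, after smoothing the relevant finite cyclic action via Pardon, then coning off the sphere), and then disposing of the case where $M'$ is a solid torus but $M\neq M'$ by a separate argument: double the solid torus along its boundary to get $S^3$ and invoke Smith theory --- a prime-order orientation-preserving homeomorphism of $S^3$ has fixed set a single circle, whereas the doubled action fixes both the core circle of the glued-in solid torus and the cone point coming from the coned-off sphere, a disconnected set. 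You would need an equivariant prime-decomposition step and some analogue of this argument.

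The second gap is that your key classification claim --- a compact $2$-orbifold with boundary admitting commuting rotations of arbitrarily high order must be a disk with at most one cone point --- is false as stated: the annulus and the M\"obius band carry circle actions, hence commuting rotations of every order. These bases correspond exactly to the Seifert manifolds with non-unique fiberings, $T^2\times I$ and the twisted $I$-bundle over the Klein bottle; the former is excluded since $M$ has a single boundary torus, but the latter also has a single torus boundary and could be all of $M$, and nothing in your argument excludes it. Relatedly, if the fiber slope is parallel to one of the two factors, the corresponding descended rotation is trivial (as your own $\gcd$ formula shows), so ``both unbounded'' fails in general. The paper resolves precisely this point differently: its Lemma on invariant curves in $T^2$ (a simple closed curve invariant under a rotation of order $q$ in the first factor has second homology coordinate divisible by $q$) forces the regular fiber on $\partial M$ of any invariant fibration to be freely homotopic to the first $S^1$ factor using $(f,1)$, and to the second using $(1,g)$ --- a contradiction when the fibering is unique --- and when it is not unique, repeating the argument with the diagonal-type elements $(f,g)$, $(f^2,g)$, etc., forces infinitely many distinct fiberings, which happens only for the solid torus. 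One smaller point: on a hyperbolic piece, boundedness of the order of the restriction of $F_n$ does not by itself contradict $n\to\infty$, since the restriction could even be trivial while $F_n$ has huge order; you need the paper's observation that a finite-order diffeomorphism that is the identity on an open set is the identity globally, applied to a bounded power of $F_n$.
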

One can avoid the assumption that $M$ is orientable by lifting the action and running the same argument on the orientation cover (which will not affect the property that $G_i$ still has elements of arbitrarily high order), but for simplicity we continue to assume all manifolds orientable.  

The cohomological approach to the bordism problem uses powers of the Euler class as  cohomological obstructions to extending group actions. In this case, instead of assuming the existence of torsion, we assume a weaker condition of nonvanishing of powers of the Euler class as follows.  As is well known, $H^*(\BH_0(T^2);\bQ)\cong \bQ[x_1,x_2]$ where each $x_i$ is a cohomology classes of degree $2$.  One can think of $x_1$ and $x_2$ as the Euler classes from each factor of $T^2=S^1\times S^1$. 
We show the following.  
\begin{thm}\label{main1}
Let $M$ be an irreducible $3$-manifold whose boundary $\partial M$ is homeomorphic to a torus and suppose that the product action of  $H =  G_1\times G_2$  on $\partial M = S^1 \times S^1$ induces an injective map $\bQ[x_1,x_2]\to H^*(\mathrm{B}H;\bQ)$.    If this action extends to any $C^0$ action on $M$, then  $M\cong D^2\times S^1$. 
\end{thm}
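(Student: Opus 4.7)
The plan is to argue by contradiction: assuming the $H$-action extends to a $C^0$-action on $M$ with $M \not\cong D^2 \times S^1$ (and $M$ irreducible), I will exhibit a nonzero polynomial $P(x_1,x_2) \in \bQ[x_1,x_2]$ whose image in $H^*(\mathrm{B}H;\bQ)$ vanishes, contradicting the injectivity hypothesis. An extension $H \to \Homeo(M)$ restricts on $\partial M$ to the original action, giving a commutative square of classifying spaces; pulling the Euler classes $x_1,x_2$ through this square localizes the search for $P$ in the rational cohomology of an appropriate classifying space built from $M$.

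First, I would observe that irreducibility together with the assumption $M \not\cong D^2 \times S^1$ forces $\partial M$ to be incompressible in $M$: a compressing disk would, via the loop theorem and irreducibility, give $M \cong D^2\times S^1$. Geometrization then decomposes $M$ along its JSJ tori into Seifert fibered and complete hyperbolic pieces; let $M_0$ denote the piece containing $\partial M$, so that $M_0$ is either Seifert fibered over a $2$-orbifold or has interior carrying a complete finite-volume hyperbolic metric with $\partial M_0 = \partial M$ as a cusp torus.

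The heart of the argument is then to analyze the restriction map from $H^*$ of a classifying space of isotopically trivial self-homeomorphisms of $M_0$ fixing a collar of the torus boundary down to $\bQ[x_1,x_2]$, and show that it fails to be injective. In the Seifert case, the fiber class $\lambda \in H_1(T^2;\bZ)$ is canonical; in coordinates where $\lambda$ is the second $S^1$ factor, the Gysin sequence of the Seifert fibration $S^1 \to M_0 \to \mathcal{O}$ over the orbifold base should express some power of $x_2$ as an image from $H^*(\mathrm{B}\Homeo(\mathcal{O});\bQ)$, producing a nonzero polynomial in $x_1,x_2$ that dies in the extension. In the hyperbolic case, Mostow--Prasad rigidity combined with a $C^0$ rigidity argument for cusped hyperbolic $3$-manifolds should show that the isotopically trivial homeomorphism group, restricted to the cusp torus, lands in something cohomologically trivial, forcing both $x_1$ and $x_2$ individually to vanish in $H^*(\mathrm{B}H;\bQ)$ and giving the contradiction at once.

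The main obstacle will be the Seifert fibered case: extracting an explicit nonzero relation in $\bQ[x_1,x_2]$ from the Seifert data requires either a careful computation of the rational cohomology of the classifying space of the homeomorphism group of $M_0$ or a geometric workaround using the Euler number of the Seifert fibration and its interaction with the boundary slope. A secondary difficulty is that the $C^0$ hypothesis precludes direct use of smooth equivariant geometrization, so the rigidity of the geometric decomposition (up to isotopy) must be carried out in the topological category, likely by appealing to the topological uniqueness of geometric structures on irreducible $3$-manifolds with incompressible torus boundary.
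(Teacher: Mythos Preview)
Your overall architecture matches the paper's: reduce Theorem~\ref{main1} to showing that the restriction map
\[
r^*\colon H^*(\BDiff_0(T^2);\bQ)\cong\bQ[x_1,x_2]\longrightarrow H^*(\BDiff(M,\partial_0);\bQ)
\]
has nontrivial kernel whenever $M$ is irreducible with incompressible torus boundary and $M\not\cong D^2\times S^1$, then conclude by the commutative triangle through $\mathrm{B}H$. The paper also handles the $C^0$ issue exactly as you anticipate, by invoking the weak equivalence $\Diff\hookrightarrow\Homeo$ in dimension $\le 3$. Your incompressibility observation and the JSJ reduction to the boundary piece are likewise what the paper does (the paper justifies the reduction via Hatcher's theorem on embedding spaces of incompressible tori, which you should cite rather than leave implicit).

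The substantive divergence is in how you propose to produce the relation in the Seifert case, and here there is a real gap. You invoke ``the Gysin sequence of the Seifert fibration $S^1\to M_0\to\mathcal{O}$'' to force a relation among $x_1,x_2$. But that Gysin sequence computes $H^*(M_0)$ from $H^*(\mathcal{O})$; it says nothing directly about $H^*(\BDiff(M_0,\partial_0))$ or about the classes $x_1,x_2\in H^2(\BDiff_0(T^2))$, which live on a classifying space, not on $M_0$. To make your idea work you would need a \emph{fiberwise} Gysin sequence over $\BDiff(M_0,\partial_0)$, which presupposes that every diffeomorphism is fiber-preserving up to isotopy and that the universal $M_0$-bundle carries a compatible Seifert structure---facts that themselves require most of the machinery you are trying to avoid.

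The paper sidesteps this entirely. Rather than hunting for an explicit polynomial relation, it shows that the \emph{target} $H^*(\BDiff(M,\partial_0);\bQ)$ is finite-dimensional over $\bQ$, so any ring map from $\bQ[x_1,x_2]$ must have a kernel. Concretely: Hatcher's theorem gives $\Diff_0(M,\partial_0)\simeq *$ or $S^1$, so $\BDiff(M,\partial_0)$ is controlled by $\text{Mod}(M,\partial_0)$, and the paper then quotes Hatcher--McCullough (the solution to Kontsevich's conjecture) together with the short exact sequence $1\to H_1(\Sigma,\partial\Sigma)\to\text{Mod}(M,\partial_0)\to\text{Mod}^*(\Sigma,S)\to 1$ for fiber-preserving mapping classes to conclude that $\text{Mod}(M,\partial_0)$ has finite rational cohomological dimension. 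In the hyperbolic case the same Hatcher theorem plus Mostow rigidity gives $\Diff(M,\partial_0)\simeq\text{Mod}(M,\partial_0)$ finite, which is your idea made precise. I would recommend replacing your Gysin sketch with this finite-cohomological-dimension argument; it is both shorter and avoids the need to identify which monomial dies.
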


If $G_1$ and $G_2$ satisfy the hypothesis of \Cref{torsion}, then $H= G_1\times G_2$ satisfies the hypothesis of \Cref{main1}. The reason is a torsion of order $k$ in each $G_i$ gives an embedding $\bZ/k\hookrightarrow \Diff_0(S^1)$. It is standard to see that for each $n$, the pullback of the $n$-th power of the universal Euler class to the group $H^{2n}(\mathrm{B}\bZ/k;\bZ)$ is nonzero and is a $k$-torsion. Therefore, if the pullback of the $n$-th power of the universal Euler class to $H^{2n}(G_i;\bZ)$ were a torsion element, its order would be divisible by $k$. Since  both $G_i$ have torsion elements of arbitrarily high order, the pullback of the powers of the Euler class are non-torsion classes. Therefore, when $M$ is irreducible, \Cref{torsion}  follows from \Cref{main1} but we also provide an approach using torsion elements which might be of an independent interest.

The proof of \Cref{main1} uses the solution of Kontsevich's conjecture for irreducible $3$-manifolds with non empty boundary by Hatcher and McCullough (\cite{MR1486644}).  The statement is a generalization of \cite[Proposition 2.2]{MannNariman}, which considered a more restrictive extension problem.   

\begin{rem}
The product action of $G' \times G'$ where $G'$ is Thompson's group, as discussed above, fits the conditions of Theorem \ref{main1}, so this gives an alternative proof of Theorem \ref{Thompson} in the restricted case where the bounding manifold $M$ is required to be irreducible.    
It would be interesting to see if one can drop the irreducibility condition in  \Cref{main1}.  
\end{rem}

Since our motivation comes from the question of bordism of group actions, we framed Theorem \ref{main1} as a rigidity statement for actions of a fixed group $H$.  However, if one is interested instead in finding obstructions to extending actions of a group to a {\em fixed} irreducible 3-manifold $M$, then our work applies to a much wider class of groups.   For a given irreducible $3$-manifold $M$ bounding $T^2$ (and not homeomorphic to the solid torus), we can find an integer $n(M)$ depending on $M$ such that $x_i^k\in H^{2k}(G_i;\bQ)$ cannot be nonzero for both $i=1,2$ when $k>n(M)$.    Thus,  one does not need injectivity of $\bQ[x_1,x_2]\to H^*(\mathrm{B}H;\bQ)$, but rather only that powers up to $n(M)$ do not vanish.  

%

The advantage of the cohomological approach is that 
\Cref{main1}, in principle, could provide an obstruction to extending actions of {\em torsion free} groups. For example, the mapping class group of a surface of genus $g$ with a marked point $\Gamma_{g,1}$ is a subgroup of $\tH_0(S^1)$. 
It is known (\cite{MR2871163}) that $e^{g-1}\in H^{2g-2}(\Gamma_{g,1};\bQ)$ is not zero where $e\in H^2(\Gamma_{g,1};\bQ)$ is the Euler class. On the hand, we know that $\Gamma_{g,1}$ has torsion free finite index subgroups (\cite[Theorem 6.9]{MR2850125}). Hence, we have torsion free subgroups of $\tH_0(S^1)$ that support high powers of the Euler class.  If we consider their diagonal embeddings in $\tH_0(S^1 \times S^1)$, we could use our method of proof for \Cref{main1} to obstruct extending such actions on torus to certain irreducible $3$-manifolds. 

\subsection*{Acknowledgment}SN was partially supported by  NSF grant DMS-1810644 and acknowledges the support from the European Research Council (ERC) under the European Union's Horizon 2020 research and innovation programme (grant agreement No. 682922).   KM was partially supported by NSF CAREER grant DMS 1844516 and a Sloan fellowship.   We also thank the referee for the helpful comments and suggestions. 

\section{Finite order diffeomorphisms of $3$-manifolds}
To prove \Cref{torsion}, we first analyze how existence of finite order diffeomorphisms of high order constrains the possible geometric structures on a $3$-manifold.  
This section can be read independently from the rest of the work.  

\begin{prop} \label{prop:isometry}
Let $M$ be a compact, orientable, irreducible 3-manifold, possibly with boundary.   
There exists $n=n(M)$ such that any nontrivial finite order diffeomorphism of $M$ of order at least $n$ acts on $M$ by isometries of a Seifert fibered geometric structure on $M$.  In particular, if $M$ is not itself geometric and Seifert fibered, then it does not admit finite order diffeomorphisms of arbitrarily high order. 
\end{prop}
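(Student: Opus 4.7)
The plan is to invoke equivariant geometrization for finite smooth actions on irreducible $3$-manifolds, due to Dinkelbach--Leeb (resting on Perelman's Geometrization Theorem and the Orbifold Theorem of Boileau--Leeb--Porti): any finite group action by diffeomorphisms on an orientable irreducible $3$-manifold is smoothly conjugate to one preserving a JSJ decomposition and acting on each piece by isometries of a geometric structure on that piece. Applying this to the cyclic group $\langle \phi \rangle$, write $M = M_1 \cup \cdots \cup M_s$ for the JSJ decomposition; the induced permutation of the pieces has order $\ell \le s!$, so $\phi^\ell$ preserves each $M_i$ and acts as an isometry of its geometric structure.

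The key technical input is a rigidity lemma: an orientation-preserving isometry of a connected Riemannian $3$-manifold that fixes a codimension-$1$ submanifold pointwise is the identity, since its differential at a fixed point is forced to be the identity on tangent directions along the submanifold and, by orientation-preservation, on the normal direction as well. In particular, the restriction map $\mathrm{Isom}^+(M_i) \to \mathrm{Isom}(T)$ is injective for each boundary torus $T \subset \partial M_i$.

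The argument then splits into cases. If $M$ is itself geometric Seifert fibered, we are done; if $M$ is hyperbolic, Mostow rigidity gives $|\mathrm{Isom}^+(M)|<\infty$ and the value of $n(M)$ is immediate. Otherwise $M$ has nontrivial JSJ. If some piece $M_i$ is hyperbolic, then $|\mathrm{Isom}^+(M_i)|$ is finite and the injectivity of the restriction map propagates this bound across adjacent pieces (using connectedness of the dual graph) to a uniform bound on the order of $\phi^\ell$. If all pieces are Seifert fibered, use the structure $1 \to S^1 \to \mathrm{Isom}^+(M_i) \to \Gamma_i \to 1$ with $\Gamma_i$ finite: after taking a further bounded power $N$, the element $\phi^{\ell N}|_{M_i}$ lies in the fiber $S^1$, and so restricts to a translation of each boundary torus in the direction of the Seifert fiber of $M_i$. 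Because the JSJ decomposition separates Seifert pieces with incompatible fibrations, two adjacent pieces give translations in different directions on the shared torus, and two such translations can agree only if both are trivial, forcing $\phi^{\ell N}|_T = \mathrm{id}$ and by rigidity $\phi^{\ell N} = \mathrm{id}$.

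The main obstacle I expect is making the last step precise: verifying that adjacent Seifert JSJ pieces really do induce distinct fiber directions on their shared torus, which relies on the canonicity of the JSJ decomposition and requires care with the exceptional Seifert fibered spaces admitting non-unique fibrations (small Seifert fibered spaces, certain twisted $I$-bundles). Once this is in hand, the various bounds $s!$, $|\mathrm{Isom}^+(M_i)|$, and $|\Gamma_i|$ combine into an explicit value of $n(M)$.
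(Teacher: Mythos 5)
Your overall architecture is the same as the paper's: pass to a power of $\phi$ preserving an invariant geometric decomposition (the paper cites Zimmermann where you cite Dinkelbach--Leeb, and uses Meeks--Scott to replace finite-order diffeomorphisms by isometries), kill the hyperbolic case by Mostow rigidity, reduce on Seifert pieces to a fiber rotation after a bounded power, and derive a contradiction at a torus shared by two adjacent Seifert pieces. Your rigidity-and-propagation lemma is a reasonable substitute for the paper's appeal to the fact that a nontrivial finite-order diffeomorphism cannot be the identity on an open set (though note the JSJ tori are cusp cross-sections of the finite-volume geometric structures, not codimension-one submanifolds of them, so the injectivity of $\mathrm{Isom}^+(M_i)\to\mathrm{Isom}(T)$ needs to be run on horospherical tori). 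However, there are two genuine gaps. First, your trichotomy ``geometric Seifert fibered / hyperbolic / nontrivial JSJ'' omits the case where $M$ is a closed solvmanifold: Sol is geometric but not Seifert fibered, and since finite-volume Sol manifolds are closed they never occur as pieces of a nontrivial JSJ, so this is precisely a case the ``in particular'' clause of the proposition must cover. Bounding the order of finite-order diffeomorphisms of a solvmanifold is not a formal consequence of finiteness of an isometry group, because conjugating different finite-order diffeomorphisms may produce different Sol metrics, and one needs a bound uniform over all of them; the paper devotes Lemma~\ref{lem:sol} to exactly this, via the largest root in $\mathrm{SL}_2(\bZ)$ of the Anosov monodromy, and notes that no stand-alone reference exists.

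Second, your key closing claim --- that two translations of the shared torus in different fiber directions ``can agree only if both are trivial'' --- is false. On $T=\bR^2/\bZ^2$ with fiber classes $a=(1,0)$ and $c=(1,2)$, translation by $(1/2,0)$ is a translation along $a$ and simultaneously a translation along $c$, since $\tfrac{1}{2}c=(1/2,1)\equiv(1/2,0) \bmod \bZ^2$; so a nontrivial fiber rotation for one fibration can coincide with a fiber rotation for the other. What is true, and what the paper proves as Lemma~\ref{lem:invariant_curve}, is quantitative: a rotation of order $q$ in the $a$-direction leaving invariant a simple closed curve in the class $[a]^u[b]^v$ forces $q$ to divide $v$, so the order of any such common rotation is bounded by the homological offset $|v|$ between the two fiber classes (maximized over the finitely many pairs of fiberings, which is how the paper handles the non-uniqueness you flagged). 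This bounded-order version still yields the proposition --- one simply builds the threshold $t>|v|$ into $n(M)$, as the paper does --- so your architecture survives, but the step as you wrote it, asserting triviality rather than bounded order, would fail. You correctly identified this step as the main obstacle; the fix is precisely the paper's invariant-curve lemma rather than a dichotomy between trivial and impossible.
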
 

This proposition relies heavily on Thurston's geometrization conjecture, which we recall here: 
\begin{thm}[Geometrization] 
The interior of any compact, orientable 3-manifold can be split along a finite collection of essential,
pairwise disjoint, embedded spheres and tori into a canonical collection of finite-volume geometric
3-manifolds after capping off all boundary spheres by 3-balls.
\end{thm}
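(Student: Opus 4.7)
The plan is to apply the equivariant form of Thurston's geometrization to the cyclic group $\langle f\rangle$ and case-split on the resulting geometric decomposition of $M$. Equivariant geometrization---due to Meeks--Scott in the Seifert case and completed in general by Dinkelbach--Leeb after Perelman---applied to $\langle f\rangle$ furnishes an $f$-invariant collection of essential tori realizing the JSJ decomposition of $M$, together with an $f$-invariant geometric structure on each complementary piece $P_1,\dots,P_m$, on which $f$ then acts by isometries. Since $M$ is irreducible no sphere decomposition enters. The map $f$ permutes the finitely many pieces, so $f^{m!}$ fixes each $P_i$ setwise, and it suffices to analyze this power piece by piece.

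If $m=1$ and $P_1=M$ is Seifert-geometric, then equivariant geometrization already supplies a geometric Seifert structure on $M$ preserved by $f$, giving the conclusion of the proposition. In every remaining case I would show the order of $f$ is bounded in terms of $M$, so that choosing $n=n(M)$ strictly larger than this bound excludes those cases. If some piece $P_i$ carries a finite-volume hyperbolic or Sol structure, then $\mathrm{Isom}(P_i)$ is finite of order $K_i$ bounded in terms of $M$---by Mostow--Prasad rigidity for hyperbolic pieces and direct analysis for Sol. Hence $f^{m!K_i}$ acts as the identity on $P_i$; since a nontrivial periodic diffeomorphism of a connected manifold cannot be the identity on a nonempty open set, $f^{m!K_i}=\mathrm{id}$ on all of $M$, so $|f|\le m!K_i$.

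If instead every $P_i$ is Seifert fibered but $m\ge 2$, I would use the defining property of the JSJ decomposition: on each JSJ torus $T$ the Seifert fiber classes of the two adjacent pieces are distinct primitive elements of $H_1(T)$. The identity component of the isometry group of a Seifert-geometric piece with boundary consists of rotations along the fibers, which restrict on any boundary torus to a translation in the fiber direction; compatibility of a global isometry across $T$ forces two such translations in distinct directions to agree, which compels both to vanish. Because $M$ is connected its JSJ graph is connected, so iterating this constraint collapses the identity component of the compatible subgroup of $\prod_i \mathrm{Isom}(P_i)$; the compatible group is then a closed discrete subgroup of a compact Lie group, hence finite, and we again bound $|f|$. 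Taking $n(M)$ larger than the resulting bounds in both subcases completes the argument. The main obstacle is this multi-Seifert-piece analysis: one must verify carefully that the linear gluing constraints at the JSJ tori really kill every fiber-rotation degree of freedom, and that the resulting discrete compatibility group is finite. A secondary subtlety is passing from the ``isometry up to smooth conjugacy'' produced by equivariant geometrization to a genuine smooth action by isometries, which relies on standard rigidity of periodic smooth transformations.
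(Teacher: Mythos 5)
The statement you were asked about is the Geometrization theorem itself. The paper offers no proof of it, and none should be expected: it is Thurston's geometrization conjecture, established by Perelman via Ricci flow with surgery, and the paper explicitly treats it as a black box, referring the reader to the survey of Bessi\`eres--Besson--Boileau--Maillot--Porti for an exposition (and noting that for its intended application, where $M$ has nonempty boundary, Thurston's own theorem for Haken manifolds suffices). Your proposal does not engage with this statement at all. What you have sketched is instead a proof of the paper's Proposition 2.1 --- that a finite-order diffeomorphism of $M$ of sufficiently high order acts by isometries of a Seifert fibered geometric structure --- and your sketch does broadly track the paper's actual argument for that proposition (invariant decomposition, Mostow/Sol bounds on hyperbolic and Sol pieces, a separate analysis when all pieces are Seifert fibered), though the paper handles the multi-Seifert-piece case differently, via invariant fiber curves on a JSJ torus and an elementary lemma on rotation-invariant curves rather than by collapsing the identity components of a compatibility subgroup of $\prod_i \mathrm{Isom}(P_i)$.

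As a proof of the stated theorem, the proposal has a fatal and structural gap: it is circular. Your very first step invokes ``the equivariant form of Thurston's geometrization'' (Meeks--Scott, Dinkelbach--Leeb), which presupposes --- indeed strengthens --- the theorem you were asked to prove. Nothing in the remainder of the argument produces the geometric decomposition; every case analysis takes the existence of finite-volume geometric structures on the pieces as given. There is no route from your sketch to the Geometrization theorem: proving it requires an entirely different toolbox (Ricci flow with surgery, or Thurston's hyperbolization machinery in the Haken case), none of which appears here. The correct response for this statement is the one the paper takes --- cite it as a known deep theorem --- or, if a proof were genuinely demanded, to recognize that it lies far outside the scope of a finite-order-diffeomorphism argument of the kind you propose.
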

We refer the reader to \cite{BBBMP} for a  survey and further references.  
While we prove the general statement above, in our intended application $M$ is assumed to have nonempty boundary, and for this we need only use Thurston's geometrization theorem for Haken 3-manifolds.  

The proof of Proposition \ref{prop:isometry} proceeds by considering a decomposition of $M$ into geometric pieces (which we will eventually see is forced to be trivial if $M$ admits diffeomorphisms of arbitrarily high order).  Of the eight 3-dimensional geometries, the only two which are not Seifert fibered are $\mathbb{H}^3$ and Sol.  
As is well known, Mostow rigidity implies that the isometry group of a finite volume hyperbolic 3-manifold is finite, hence the maximal order of a finite order element is bounded.  This is also true of solvmanifolds, however we did not find a stand-alone proof in the literature, so provide one now.   

\begin{lem} \label{lem:sol}
Let $N$ be a finite volume solvmanifold.  There exists $k \in \bN$ such that $N$ has no finite order diffeomorphism of order greater than $k$.  
\end{lem}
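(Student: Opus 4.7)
The plan is to reduce to the case of a torus bundle over $S^1$ with Anosov monodromy, compute the outer automorphism group of $\pi_1(N)$ and show it is finite, and finally invoke rigidity to inject finite-order diffeomorphism groups into $\mathrm{Out}(\pi_1(N))$.

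\textbf{Reduction.} I would first use that every closed (= finite volume, since Sol has no cusps) Sol $3$-manifold is finitely covered by a mapping torus $\widetilde N = T^2 \rtimes_A S^1$ of an Anosov matrix $A\in \mathrm{SL}_2(\bZ)$ (i.e.\ $|\mathrm{tr}\,A|>2$). Passing to a characteristic finite-index subgroup of $\pi_1(N)$ guarantees that any diffeomorphism of $N$ lifts to $\widetilde N$, and a lift of an order-$k$ element has order at most $[\pi_1(N):\pi_1(\widetilde N)]\cdot k$. So it suffices to bound finite orders in $\Diff(\widetilde N)$.

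\textbf{Finiteness of $\mathrm{Out}(\pi_1(\widetilde N))$.} Write $\pi := \pi_1(\widetilde N) = \bZ^2 \rtimes_A \bZ$. The subgroup $\bZ^2$ is the Fitting subgroup of $\pi$ and hence characteristic, so any $\phi\in\mathrm{Aut}(\pi)$ restricts to some $B\in \mathrm{GL}_2(\bZ)$ with $BAB^{-1}=A^{\pm 1}$, i.e.\ $B$ normalizes $\langle A\rangle$. Dirichlet's unit theorem applied to the order $\bZ[A]$ in the real quadratic field $\bQ(\lambda_A)$ shows that $C_{\mathrm{GL}_2(\bZ)}(A)\cong \{\pm 1\}\times\bZ$, in which $\langle A\rangle$ has finite index; the full normalizer contains the centralizer with index at most $2$. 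The translation part of $\phi$, modulo inner automorphisms by $\bZ^2$, lies in the finite group $\bZ^2/(I-A^{\pm 1})\bZ^2$, which is finite since $\det(I-A) = 2-\mathrm{tr}(A) \neq 0$. Assembling these pieces, $\mathrm{Out}(\pi)$ is a finite group of order bounded purely in terms of $A$.

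\textbf{Injection of finite-order diffeomorphisms.} A direct check shows $Z(\pi) = 1$: any central element must commute with all of $\bZ^2$ and with the Anosov generator $t$, which forces it to be trivial. Since $\widetilde N$ is a closed aspherical manifold with centerless fundamental group, Borel's classical rigidity theorem for finite group actions on aspherical manifolds (developed by Conner--Raymond) guarantees that any finite subgroup $G\subset \Diff(\widetilde N)$ injects into $\mathrm{Out}(\pi)$. Combined with the previous step, this bounds $|G|$, giving the desired constant $k(N)$.

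\textbf{Main obstacle.} The subtlest point is the last step: ruling out nontrivial finite-order diffeomorphisms that act trivially on $\pi$ up to inner automorphism. An alternative to citing Borel would be to use Hatcher's theorem that $\Diff_0(\widetilde N)$ is contractible (since $\widetilde N$ is Haken with $Z(\pi)=1$), together with the Bochner--Montgomery theorem that compact subgroups of $\Diff(\widetilde N)$ are finite-dimensional Lie groups: a nontrivial finite subgroup of $\Diff_0(\widetilde N)$ would give a nontrivial compact subgroup of a contractible connected topological group, which is impossible.
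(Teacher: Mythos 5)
Your main argument is correct, and it takes a genuinely different route from the paper. The paper stays geometric: it first invokes the Meeks--Scott theorem that every finite-order diffeomorphism of a solvmanifold is conjugate to an isometry, passes to the degree-at-most-$8$ torus-bundle cover $G/\Gamma'$, and then bounds the order of an isometry via the maximal root $r$ of the Anosov monodromy $A$ in $\mathrm{SL}_2(\bZ)$: the group generated by $\Gamma'$ and a power of the lifted isometry is again a discrete subgroup of $\mathrm{Sol}$ whose quotient is the mapping torus of a root of $A$, giving the explicit bound $d \leq 8r$. You replace both ingredients by algebraic rigidity: finiteness of $\mathrm{Out}(\pi)$ for $\pi = \bZ^2 \rtimes_A \bZ$, computed directly (the Fitting/characteristic subgroup $\bZ^2$, the Dirichlet-units description $C_{\GL_2(\bZ)}(A) \cong \{\pm 1\} \times \bZ$, and finiteness of $\bZ^2/(I-A^{\pm 1})\bZ^2$ since $\det(I-A) = 2 - \mathrm{tr}(A) \neq 0$), together with Borel's theorem (Conner--Raymond) that an effective finite group action on a closed aspherical manifold with centerless fundamental group injects into $\mathrm{Out}(\pi_1)$. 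Your reduction step is sound once stated carefully: take a characteristic finite-index subgroup contained in $\Gamma \cap \mathrm{Sol}$ (e.g.\ the intersection of all subgroups of the relevant index), so the corresponding cover is again an Anosov torus bundle and every diffeomorphism lifts with finite order dividing a bounded multiple of the original. Your route buys topological generality --- Borel's theorem applies to homeomorphisms, whereas Meeks--Scott is a smooth statement --- at the cost of an explicit bound: your $k(N)$ is hidden in $|\mathrm{Out}(\pi)|$, while the paper's is the concrete $8r$.

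One genuine flaw, though only in your ``alternative'' aside: it is false that a contractible connected topological group cannot contain a nontrivial compact (or even finite) subgroup. By Kuiper's theorem the unitary group of an infinite-dimensional Hilbert space is contractible, yet it contains every finite cyclic group as scalars; so contractibility of $\Diff_0(\widetilde N)$ plus Bochner--Montgomery does not by itself exclude finite subgroups of $\Diff_0(\widetilde N)$. The content you need there is exactly Borel's theorem, whose proof runs through Smith theory applied to the lifted extension $1 \to \pi \to E \to \bZ/p \to 1$ acting on the universal cover, not through a general principle about contractible groups. Since your primary argument cites Borel directly, the proof stands; delete or repair the aside.
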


\begin{proof}
Let $N = G/\Gamma$ be a finite volume solvmanifold, where $G = \mathrm{Sol}$ and $\Gamma$ is a discrete subgroup of $\mathrm{Isom}(\mathrm{Sol})$.  We recall some general structure theory, further details can be found in \cite{Scott}.  
The group $G$ has the structure of a split extension 
\[ 0 \to \mathbb{R}^2 \to G \to \mathbb{R} \to 0 \]
where $t \in \mathbb{R}$ acts on $\mathbb{R}^2$ by $t\cdot(x,y) = (e^t x, e^{-t}y)$.  Identifying $G$ with triples of real numbers $(x,y,t)$, the  planes $t$=constant give a foliation of $G$ invariant under isometries and $N$ is naturally a finite quotient of a torus bundle over the circle, hence compact.  
The identity component of $\mathrm{Isom}(G)$ is simply $G$ itself acting by left-multiplication and has index 8.  This implies that $N$ has a finite cover (of degree at most 8) that is the quotient of $G$ by a discrete subgroup.  Every such manifold is the mapping  torus of a linear Anosov map of $T^2$.   

Let $h$ be a finite order diffeomorphism of $N$.  By \cite[Thm. 8.2]{MeeksScott}, $h$ is conjugate to an isometry, so we assume without loss of generality that $h$ is an isometry.  It is also no loss of generality to lift $h$ to the mapping torus cover $N'$ of $N$ and prove that the order of the lift is bounded, so now we work with a finite order isometry of $N' = G/\Gamma'$, where $\Gamma' = \Gamma \cap G$.  Abusing notation, let $h$ denote this isometry, and suppose it is of order $d$.   Referring to the split extension sequence above, we have $\Gamma' \cap \bR^2 \cong \bZ \times \bZ$ and $\Gamma' \cap \bR \cong \bZ$, and if $\Gamma' \cap \bR$ is generated by $t \in \bR$, then the monodromy of the mapping torus is given by $\left( \begin{smallmatrix} e^t & 0 \\ 0 & e^{-t} \end{smallmatrix} \right)$, which in the basis given by the identification $\Gamma'  \cap \bR^2 \cong (\bZ \times \bZ)$ is some integer matrix $A \in \mathrm{SL}_2(\bZ)$.  
 
 Let $r$ be the largest root of $A$ in $\mathrm{SL}_2\mathbb{Z}$, i.e. the maximal number such that there exists some $B \in \mathrm{SL}_2\mathbb{Z}$ with $B^r = A$, such an $r$ exists since $\mathrm{SL}_2\mathbb{Z}$ is discrete.  The map $h$ lifts to an isometry $\hat{h}$ of $G$ preserving the vertical two-dimensional foliation of $G$.  Again, for simplicity, we can work instead with $\hat{h}^8$ which still descends to a finite order isometry of $N'$ (say of order $d'$) and now lies in $G$.  The subgroup generated by $\Gamma'$ and $\hat{h}$ is again discrete in $G$, with quotient the mapping torus of an Anosov map which is a root of $A$, since the $d'$-fold cover of this mapping torus is simply $N'$.  Thus $d' < r$, and since $8d' \geq d$, we conclude $d \leq 8r$.  
\end{proof}

We need a further lemma on Seifert fibered manifolds, which follows from work of Meeks and Scott.  

\begin{lem} \label{lem:seifert_torus_boundary}
Let $N$ be an irreducible Seifert fibered manifold with a torus boundary component $T$.  Assume $N$ is not diffeomorphic to the solid torus or an $S^1$ bundle over the annulus or Mobius band.   There exists $l\in \mathbb{N}$ such that any diffeomorphism of $N$ preserving $T$ of order at least $l$ has a nontrivial power which preserves the fibers of some Seifert fibration.   
\end{lem}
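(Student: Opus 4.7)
The plan is to combine the classical uniqueness of the Seifert fibration on $N$ with the Meeks--Scott equivariant Seifert fibered theorem, choosing $l$ large enough to avoid a finite list of exceptional small-order actions.

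First, under the excluded hypotheses on $N$, I would invoke the standard uniqueness-of-Seifert-fibration result: since $N$ is orientable, irreducible, Seifert fibered with torus boundary, and not a solid torus, an $S^1$-bundle over the annulus, or an $S^1$-bundle over the M\"obius band, the Seifert fibration of $N$ is unique up to ambient isotopy. This is the Waldhausen--Jaco--Shalen--Johannson uniqueness statement in the Haken setting; its consequence is that the isotopy class of a regular fiber on $T$ is a topological invariant of $N$. Hence any self-diffeomorphism $h$ of $N$ preserving $T$ sends the Seifert fibration to one isotopic to itself, and is therefore isotopic to a fiber-preserving diffeomorphism, and also acts trivially on the (singleton) set of isotopy classes of Seifert fibrations.

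The main step is to upgrade ``isotopic to fiber-preserving'' to ``literally fiber-preserving for some Seifert fibration,'' and for this I would apply the Meeks--Scott theorem on finite group actions on Seifert fibered 3-manifolds: for a smooth action of a finite group on an irreducible orientable Seifert fibered 3-manifold outside an explicit finite list of small-volume exceptions, there is an equivariant Seifert fibration. Applied to the cyclic group $\langle h \rangle$ of some power of $h$, this produces a Seifert fibration of $N$ whose fibers are set-wise permuted by $h^{k}$, which is the desired conclusion. To adapt Meeks--Scott to the bordered case (they treat closed manifolds), I would form the double $DN$ along $T$, extend $h$ by the canonical reflective involution, apply the closed version to the finite group action on $DN$, and then use the $\bZ/2$-symmetry of the doubling to descend the invariant fibration back to $N$.

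The role of $l$ is to dodge the Meeks--Scott exceptions and the sign ambiguity of the fiber direction: the exceptional small group actions arise only for a bounded finite collection of group orders (depending on $N$), and similarly the $\bZ/2$ sign ambiguity on the regular fiber class requires at worst passing from $h$ to $h^{2}$. Taking $l$ greater than twice all of these finitely many bad orders guarantees that some nontrivial power $h^{k}$ generates a cyclic group falling outside the exceptional list and preserving fiber orientation, at which point Meeks--Scott applies and yields the desired invariant fibration. The main technical obstacle I expect is the bordered adaptation of Meeks--Scott: one must check that the doubled action on $DN$ still falls outside their exceptional list, using the assumption that $N$ itself is not one of the three excluded small fiberings (so that $DN$ is not, for example, a lens space or an $S^{2}$-bundle in which equivariant fibrations can fail to exist).
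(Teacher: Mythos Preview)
Your proposal has a genuine gap: you stop at the conclusion that some power $h^k$ preserves an invariant Seifert fibration (i.e., permutes fibers), but the lemma as proved and applied in the paper requires the stronger conclusion that some nontrivial power preserves \emph{each} fiber setwise (equivalently, acts trivially on the base orbifold and hence rotates the fibers). This stronger conclusion is exactly what is used downstream in the proof of Proposition~\ref{prop:isometry}, where one needs that the power $g$ ``acts by rotating the fibers on each piece.'' Getting from fibration-preserving to fiber-preserving is the substantive part of the argument, and you have omitted it.

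The paper fills this gap as follows. Once Meeks--Scott produces an $f^2$-invariant Seifert fibration, one considers the induced finite-order action of $f^2$ on the base orbifold, which preserves the boundary circle under $T$. Under the stated exclusions on $N$, the base is not a disc with at most one cone point, an annulus, or a M\"obius band, so it carries a singular hyperbolic structure; hence any finite-order self-homeomorphism of the base has order bounded by a constant $d$ depending only on $N$. Taking $l>2d$ forces some nontrivial power of $f$ to act trivially on the base, which is the desired fiber-preserving conclusion. Thus the role of $l$ is to outrun the isometry group of the base orbifold, not (as you suggest) to dodge a finite list of exceptional Meeks--Scott cases.

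Two smaller points. First, your doubling maneuver is unnecessary: the Meeks--Scott theorem cited applies directly to the bordered manifold $N$, and doubling risks landing $DN$ among the exceptional closed Seifert manifolds. Second, your uniqueness claim is slightly too strong---the paper notes $N$ may have up to two Seifert fiberings (e.g.\ the twisted $I$-bundle over the Klein bottle)---but this is harmless, since passing to $f^2$ handles the ambiguity, as both you and the paper observe.
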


\begin{proof}
Fix a Seifert fibered manifold $N$ as in the statement, and suppose $f$ is a finite order diffeomorphism of $N$.  
By our assumptions on $N$,  it has at most two Seifert fiberings.  (With the exception of the twisted I-bundle over the torus and $I$-bundles over the Klein bottle, a manifold with torus boundary will in fact have a unique Seifert fibering, see \cite[1.1.2]{preaux2012survey} or \cite[\S 3]{Scott}).  
Thus, replacing $f$ with $f^2$, we conclude that $f$ preserves a Seifert fibration up to isotopy.  By \cite[Thm. 2.2]{MeeksScott}, $N$ therefore admits an $f^2$-invariant Seifert fibration.  Consider the induced action of $f^2$ on the base orbifold.  This is a finite order homeomorphism preserving a boundary component (corresponding to the boundary component $T$).  Unless this orbifold is the disc with one or zero cone points, the annulus, or the Mobius band (which are excluded by our assumptions), then it admits a singular hyperbolic structure and hence there is an upper bound, depending on the geometry of the orbifold, on the order of a finite order homeomorphism.  Call this bound $d$.  Thus, requiring that the order of $f$ be greater than $2d$ implies that some nontrivial power of $f$ preserves a Seifert fibration and acts trivially on the base orbifold, hence acts by rotating the fibers.  
\end{proof}

To deal with two adjacent Seifert fibered pieces that share a torus boundary in the JSJ decomposition, we need the following elementary lemma about invariant curves on tori. 
\begin{lem} \label{lem:invariant_curve}
Let $T = \bR^2 / \bZ^2$ be a torus, and let $a, b$ be simple closed curves representing a standard basis for homology.  
Suppose that $c$ is a simple closed curve invariant under the rotation $r: (x, y) \mapsto (x+ p/q, \, y)$, where $p/q \in \bQ$ is in lowest terms. 
Then $[c] \in H_1(T; \bZ)$ is of the form $[a]^k [b]^{nq}$ for some $k$ and $n$ in $\bZ$.  
\end{lem}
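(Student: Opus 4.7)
The plan is to reduce to a homology computation on the quotient torus $T' := T/\langle r\rangle$. The key observation is that since $\gcd(p,q) = 1$, the rotation $r$ has order exactly $q$ and acts freely on $T$ (any fixed point of a nontrivial power $r^j$ with $1 \le j < q$ would force $q \mid jp$, hence $q \mid j$, contradiction), so $T'$ is again a torus and $\pi : T \to T'$ is a regular $q$-fold cover. I would identify $T'$ with $\bR^2/\Lambda'$ for $\Lambda' = \bZ(1/q, 0) + \bZ(0,1)$, pick the basis $[a'], [b']$ of $H_1(T')$ given by the short loops $t \mapsto (t/q, 0)$ and $t \mapsto (0,t)$, and read off from the inclusion $\bZ^2 \subset \Lambda'$ that $\pi_*[a] = q\,[a']$ and $\pi_*[b] = [b']$. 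Writing $[c] = k[a] + m[b]$ then gives
\[
\pi_*[c] = kq\,[a'] + m\,[b'] \in H_1(T').
\]

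The second ingredient is that $r$-invariance of $c$, combined with the fact that $r$ has no fixed points on $T$, makes the restricted $\bZ/q$-action on the circle $c$ free as well. Hence $\pi|_c : c \to \pi(c)$ is a $q$-to-$1$ cover of circles, and a direct parametrization argument yields $\pi_*[c] = q\,[\pi(c)]$, so that $\pi_*[c]$ lies in the subgroup $q\cdot H_1(T')$. Matching this against the displayed formula forces $q \mid m$; writing $m = nq$ gives $[c] = k[a] + nq[b]$, which in multiplicative notation is $[a]^k [b]^{nq}$, as required.

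There is no substantial obstacle here: conceptually, the lemma just says that an $r$-invariant simple closed curve must cover its image in the quotient $T'$ with degree exactly $q$, after which the divisibility of the $[b]$-coefficient appears automatically by naturality of $\pi_*$. The only place one has to be careful is the bookkeeping of the two lattice bases and tracking the factor of $q$ coming from the cover; the case $q = 1$ is vacuous since then $r$ is the identity and every $[c]$ trivially has the claimed form.
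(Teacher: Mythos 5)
Your proof is correct and follows essentially the same route as the paper's: both pass to the quotient torus $T/\langle r\rangle \cong \bR^2/(\tfrac{1}{q}\bZ \times \bZ)$, use freeness of the action to see that $c \to c/\langle r\rangle$ is a $q$-fold covering, and deduce that the $[b]$-coefficient of $[c]$ is divisible by $q$. The only cosmetic difference is that you extract the divisibility via naturality of $\pi_*$ on $H_1$ in explicit bases, whereas the paper lifts $c$ to $\bR^2$ and counts lattice points of $\tfrac{1}{q}\bZ \times \bZ$ along the lift.
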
 

\begin{proof}
Let $c$ be a curve as claimed.  Since $r$ acts freely on $T$ and $c$ is $r$-invariant, the $q$-fold covering map $T \to T/\langle r \rangle$ restricts to a $q$-fold covering map $c \to c/\langle r \rangle$.   
Take a lift $\tilde{c}$ of $c$ to $\bR^2$ based at 0, its endpoint is some point $(k, l) \in \bZ \times \bZ$.    Projecting $\tilde{c}$ to $\bR^2 / (\frac{1}{q}\bZ \times \bZ) = T/\langle r \rangle$ gives a closed curve that represents a $q$-fold cover of the circle $c/\langle r \rangle$.  In other words, there are $q$ distinct points of $\frac{1}{q}\bZ \times \bZ$ along $\tilde{c}$, all distinct from the origin and differing by translates by some $\mathbf{v} \in \frac{1}{q}\bZ \times \bZ$, where the endpoint of $\tilde{c}$ is the point $q \mathbf{v}$. We conclude that the second coordinate $l$ is a multiple of $q$.  
\end{proof}

With this groundwork in place, we can now prove the main Proposition.  

\begin{proof}[Proof of Proposition \ref{prop:isometry}]
Given $M$, let $j$ be the number of geometric pieces in a decomposition of $M$ by tori into finite volume geometric manifolds.  
Let $k$ be the maximum order of a finite order isometry of any hyperbolic or Sol geometry piece of $M$ (the latter only possibly occurring if $M$ is itself a solvmanifold, since finite volume solvmanifolds are compact), and set $k=1$ if $M$ has no such pieces.  
Let $l$ be larger than the product of the maximal orders of non-fiber preserving isometries of each Seifert fibered piece of $M$.  

Finally, in the case where $M$ has two adjacent Seifert fibered pieces in its JSJ decomposition, fix a torus $T$ separating the two pieces.  Let $a_1 \subset T$ be a regular fiber for the Seifert fibered structure of one of the pieces with boundary $T$, and fix a transverse simple curve $b_1$ so that $a_1, b_1$ form a basis for homology, giving an identification of $T$ with $\bR^2 / \bZ^2$ where rotating the fibers of this Seifert piece corresponds to rotating the first factor.  
Let $c$ be a fiber from the other adjacent piece, then $[c] = [a_1]^u [b_1]^v \in H_1(T; \bZ)$ and fix some $t \in \bN$ with $t > |v|$.  (This choice will allow us to later quote Lemma \ref{lem:invariant_curve}.)  In the case where these pieces have non-unique fiberings, hence finitely many, choose $t$ large enough to satisfy the property above over all choices of pairs of fiberings on the adjacent pieces.  

Suppose that $M$ admits some finite order diffeomorphism $f$ of order $m > tlkj!$.  
Then one may find a decomposition of $M$ into geometric pieces invariant under $f$ \cite{Zimmerman} hence $f^{j!}$ is a finite order diffeomorphism preserving each piece.  If $k>1$, then there is some invariant hyperbolic or sol geometry piece $M_0$ such that $f^{kj!}$ is the identity on $M_0$.  Since $f^{kj!}$ is finite order, it follows that it is the identity everywhere (using the easy fact that a finite order diffeomorphism cannot be the identity on an open set) hence $f$ is of order at most $lkj!$, contradicting our choice.  Thus, $k = 1$, and $M$ cannot itself be hyperbolic or a solvmanifold by Lemma \ref{lem:sol}, so $M$ has only Seifert fibered pieces.  

We now claim that $M$ in fact has only one geometric piece. To show this, suppose for contradiction that it had at least two pieces and consider two pieces with common boundary $T$ discussed above.   Since these pieces are noncompact, their geometric structure is either $\bH^2 \times \bR$ or $\widetilde{\mathrm{SL}}_2(\bR)$, so in particular they fit the conditions given in Lemma \ref{lem:seifert_torus_boundary}, using $f^{j!}$ as our finite order isometry.  
Thus, $f^{j!}$ has a nontrivial power, call this $g$, which is of order at least $t$ and preserves (fiberwise) the Seifert fibering of each piece. Since $g$ is a nontrivial finite order diffeomorphism, it acts by rotating the fibers on each piece.  Recall that fibers of distinct pieces are nonisotopic as curves on $T$, due to the minimality of the JSJ decomposition.  Let $a \subset T$ be a regular fiber of one piece and let $c \subset T$ denote a regular fiber of the other piece.  Both are $g$-invariant curves.  Since $c$ is not isotopic to $a$, taking a basis $\{ [a], [b] \}$ for homology as we chose in the set-up to this proof 
 we have $[c] = [a]^u[b]^v \in H_1(T, \bZ)$ for some $v \neq 0$.  Applying Lemma 2.5, we conclude that $v$ is a multiple of the order of $g$.  However,  by construction, the order of $g$ was chosen so that $|v| < t \leq |g|$, a contradiction.  \end{proof}

\begin{rem} \label{rem:homeo}
While proposition \ref{prop:isometry} does not hold for homeomorphisms of $M$, one may use Pardon's theorem that any continuous finite group action on a 3-manifold can be uniformly approximated by smooth actions \cite{Pardon} to show the following:  
{\em if $M$ admits a homeomorphism $h$ of sufficiently high order (here thought of as an action of a finite cyclic group), then $M$ is Seifert fibered and $h$ can be approximated by an isometry of a model geometric structure.}
\end{rem}

As a consequence of the above result, we can now prove Theorem \ref{torsion}.  As in the previous results, ``arbitrarily high order" can be replaced by a bound which depends on the topology of $M$.



\begin{proof}[Proof of \Cref{torsion}]
We claim that, if $f \in G_1$ and $g \in G_2$ have sufficiently large order then the action of the subgroup generated by $(f,1)$ and $(1, g)$ does not extend to $M$.  
Consider first the action of the finite order element $(f,1)$.   By \cite{Pardon}, the action of the group generated by $(f,1)$ can be uniformly approximated by an action of this cyclic group by diffeomorphisms.  Moreover, since the action is free on the boundary, we may in fact take such a diffeomorphism to be conjugate on the boundary to the original action of $(f,1)$.   Take such an action by diffeomorphisms, and let $M'$ be the prime summand of $M$ containing the torus boundary.  By the equivariant sphere theorem, there is a $(f,1)$-invariant sphere bounding the punctured $M'$, we may cone off the action of $(f,1)$ on this sphere to a glued in ball to produce an action of $(f,1)$ by homeomorphisms on $M'$ that agrees with the original action on the boundary.  
By Proposition \ref{prop:isometry}, if the order of $f$ is sufficiently large then the interior of $M'$ thus has a Seifert fibered structure.  Assume first for simplicity that this structure is unique up to isotopy.  

Proposition  \ref{prop:isometry} says that  there exists $n$, depending only on the topology of $M'$, such that $f^{n}$ acts on this by rotating the fibers of the unique fibration.    Let $a \subset  \partial M'$ be a regular fiber, it is invariant under (and rotated by) $f^{n}$.  
By Lemma \ref{lem:invariant_curve}, either $a$ is freely homotopic to the first $S^1$ factor of $S^1 \times S^1 = \partial M$, or there is an upper bound on the order of a rotation $r$ of the first $S^1$ factor, such that $r(a) = a$.   Now, if $f$ was originally chosen to have sufficiently high order, then taking $r = (f,1)^{n}$ will give a rotation of order higher than this bound.  Thus, we conclude that $a$ is freely homotopic to the first $S^1$ factor. 
Applying the same argument with $(1,g)$ in place of $(f,1)$ and using uniqueness of the Seifert structure shows that $a$ is also freely homotopic to the second $S^1$ factor, a contradiction.  

In the case where $M'$ has a non-unique fibering, the argument above simply shows that the two $S^1$ factors are fibers of different fiberings.  However, the same argument can be repeated with the finite order diffeomorphisms $(f,g)$, $(f^2, g)$, etc, implying that $M'$ would in fact need to have infinitely many different fiberings. The only possibility is that $M'$ is equal to the solid torus.

To treat the case where $M'$ is a solid torus, we argue as follows.  As before, we can apply the equivariant sphere theorem and get an action of $(f,1)$ on the solid torus agreeing with the original action on the boundary.   Now double the solid torus along its boundary gluing meridian to longitue to obtain a 3-sphere.  Realizing the glued-in torus as $D^2 \times S^1$, we may extend the finite order diffeomorphism $(f,1)$ of its boundary to a diffeomorphism of $D^2 \times S^1$ preserving each torus formed by the product of a circle of radius $r$ in the first factor with the second $S^1$ factor, and preserving the central $\{0\} \times S^1$.  One simply ``cones off" the original action, identifying concentric circles.   Thus, we get an action of $(f,1)$ on the sphere.  By Smith fixed point theory, any prime order orientation-preserving homeomorphism of the 3-sphere has fixed set equal to a (possibly knotted) topological circle.  Applying this to any nontrivial power of $f$ that has prime order, we get a contradiction because the action of $(f,1)$ on the torus boundary is free, and thus the fixed set for the double action cannot be a connected set.  \end{proof}

\begin{rem} 
The above proof would be simpler if one could choose the identification of the boundary of $M$ with $S^1 \times S^1$ using the structure of $M$, i.e. simply choose the $S^1$ factor to not be a regular fibering of the Seifert piece containing the boundary.  But this is not useful for our intended application to the bordism problem.   
\end{rem}

The next proposition gives the final ingredient in the proof of Theorem \ref{Thompson}.  
As in the introduction, we let $G'$ denote the smooth conjugate of Thompson's group in $\Diff^{\infty}(S^1)$. 
\begin{prop} \label{prop:solid torus} 
Let $G' \times G'$ act on $S^1 \times S^1$ via the standard action preserving each factor.  Suppose $M$ is a solid torus with boundary $S^1 \times S^1$ (we do not require either of the factors to be a disc-bounding curve).   
Then this action does not extend to an action by $C^1$ diffeomorphisms on $M$. 
\end{prop}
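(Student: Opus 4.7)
The plan is to argue by contradiction via a Smith-theoretic doubling argument, in the spirit of the solid-torus case of the proof of Theorem~\ref{torsion}, but exploiting the abundance of torsion in Thompson's group $G'$ to handle arbitrary boundary gluings. Suppose $\rho : G' \times G' \to \Diff^1(M)$ extends the standard product action, with $M \cong D^2 \times S^1$. Write the two $S^1$-factors of $\partial M$ as $[\alpha] = pm + q\ell$ and $[\beta] = rm + s\ell$ in the meridian-longitude basis of $M$, where $ps - qr = \pm 1$. Since $G'$ contains rotations of every order, for any prime $\pi$ we may pick rotations $f, g \in G'$ of order $\pi$; by the classification of finite cyclic smooth actions on the solid torus (Meeks--Scott, as invoked in Proposition~\ref{prop:isometry}), the extensions $\rho(f, 1)$ and $\rho(1, g)$ are each conjugate in $\Diff^1(M)$ to a standard linear rotation of $D^2 \times S^1$, with rotation angles on the $D^2$- and $S^1$-factors determined by $(p, q)$ and $(r, s)$ respectively.

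Next, following the template of the solid-torus case of the proof of Theorem~\ref{torsion}, I would double $M$ along a meridian-to-longitude gluing to produce $S^3$ and extend both finite-order actions across the glued-in solid torus by the coning-off construction. Choosing the coning $T^2$-equivariantly ensures that the extensions of $(f, 1)$ and $(1, g)$ commute on the glued-in torus, so together they define a $\bZ/\pi \times \bZ/\pi$-action on $S^3$. By Smith fixed-point theory, each of the $\pi + 1$ cyclic $\bZ/\pi$-subgroups of $\bZ/\pi \times \bZ/\pi$ has fixed set in $S^3$ either empty or a single (possibly knotted) topological circle. The freeness of the boundary action on $\partial M$ confines these fixed sets to the two solid-torus halves of $S^3$, and the normalized rotation angles from the previous step specify (via divisibility conditions on $p, q, r, s$ by $\pi$) precisely whether each such fixed set consists of a central circle in one of the halves or is empty.

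The contradiction then arises by choosing $\pi$ coprime to a polynomial expression in $p, q, r, s$ built from the various linear combinations $ap + cr, aq + cs$ for $(a, c) \in (\bZ/\pi)^2$ representing the cyclic subgroups of $\bZ/\pi \times \bZ/\pi$; this is possible because $G'$ furnishes torsion of arbitrarily large prime order. With this choice, the Smith-theoretic constraints on the different cyclic subgroups become mutually incompatible: one produces the desired disconnected or otherwise forbidden fixed-set configuration. The main obstacle is executing this combinatorial analysis uniformly over all boundary gluings: a single cyclic subgroup does not suffice (as in the generic case $q \neq 0, s \neq 0$, where Theorem~\ref{torsion}'s Smith argument alone leaves room for a connected or empty fixed set), and it is essential to use both $G'$-factors simultaneously through the full $\bZ/\pi \times \bZ/\pi$-action. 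A secondary delicate point is to verify that the coned-off extensions of $(f, 1)$ and $(1, g)$ can be chosen to commute, so that Smith's theorem genuinely applies to the product action rather than only to each factor separately.
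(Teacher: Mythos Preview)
Your approach has a fundamental gap: you are using only the subgroup of \emph{rigid rotations} $\bZ/\pi \times \bZ/\pi \subset G' \times G'$, and the action of this subgroup \emph{does} extend to any solid torus bounding $T^2$.  Indeed, the standard $T^2$-action on $D^2 \times S^1$ restricts on the boundary to the rotation action, and precomposing with the appropriate automorphism of $T^2$ realises any identification $\partial M \cong S^1 \times S^1$.  Thus for every choice of $(p,q,r,s)$ there is a genuine smooth $\bZ/\pi \times \bZ/\pi$-action on $M$ extending the boundary rotations, and after your doubling construction one obtains a perfectly consistent $\bZ/\pi \times \bZ/\pi$-action on $S^3$ (namely a sub-action of the Hopf $T^2$-action).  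Smith theory places no obstruction here: the fixed circles of the various cyclic subgroups are exactly the Hopf fibres over the orbifold points, and these satisfy all the Smith-theoretic constraints.  So the ``mutually incompatible'' configuration you are hoping for cannot arise, regardless of how $\pi$ is chosen relative to $p,q,r,s$.

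The paper's proof uses an essentially different mechanism that your proposal does not touch: the \emph{perfectness} of $G'$ together with its behaviour under finite self-covers of $S^1$.  One constructs a finite-order element $r \in G' \times G'$ rotating in the meridional direction which is simultaneously a product of commutators $r = \prod_i [A_i, B_i]$ with each $A_i, B_i$ commuting with a power $r^{2km}$ of order $\geq 3$.  The contradiction then comes from a \emph{derivative} argument at the fixed circle of $r$ in the solid torus: the centraliser of the linearisation of $r^{2km}$ at a fixed point is abelian, so the derivatives of the $A_i, B_i$ commute there, forcing the derivative of $r$ to be trivial---incompatible with $r$ being a nontrivial rotation of the meridian disc.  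This argument genuinely exploits the non-abelian structure of $G'$ beyond its torsion, which is why it succeeds where a purely Smith-theoretic argument cannot.
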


\begin{proof}
First, work with the non-smooth version of Thompson's group.  Note that the lift of such a homeomorphism to a $2^k$--fold cover of the circle is again an element of Thompson's group, and more generally, if $t$ is a finite order element of Thompsons group, then the lift of any element under the map $S^1 \to S^1/\langle t \rangle$ is again an element of Thompson's group, since its breakpoints are again at dyadic points and its slopes are dyadic.   Since the smooth version is a conjugate of this action, the same result on lifts is true for the smooth version.  

We give the proof first in the case where one of the $S^1$ factors bounds a disc, as this requires only a simple adaptation of work from \cite{MannNariman}.  
Assume for concreteness that the first $S^1$ factor bounds a disc.  We will work with the subgroup $G' \times \{1\}$ acting on $S^1 \times S^1$.  
Since Thompson's group $G'$ is perfect, every element is a product of commutators.  Let $r_2$ be an order 2 rigid rotation in $G'$, and write it as a product of commutators, $r_2 = [a_1, b_1] ...[a_k, b_k]$.
Choose lifts of $a_i$ and $b_i$ to the 4-fold cover of the circle.  These will all commute with the order 4 rotation.  Denote these by $A_i$ and $B_i$ respectively.  Let $r$ denote the product of commutators $r= [A_1, B_1]...[A_k, B_k]$.  Then $r$ is a lift of the order two element, and one may check that it has order 8.   Also, $r^2$ is the covering map, so commutes with each $A_i$ and each $B_i$.   

Now we may conclude the proof by directly applying an argument from \cite{MannNariman} that was inspired by a similar strategy used by Ghys \cite{Ghys}.  In outline, one first shows that $r$ has nonempty fixed set, as can be seen easily from lifting the action to the universal cover of the solid torus and this fixed set is equal to a circle embedded in the solid torus.  This uses only the fact that $r$ is a finite order element whose action on the boundary preserves the circles of the $S^1$ factor that bounds a disc, rotating each circle. Thus, taking a trivialization of a tubular neighborhood of this fixed set, the derivatives of $r^2$ at any such point may be taken to have the form $\left( \begin{smallmatrix} A & 0 \\ 0 & 1\end{smallmatrix} \right)$, where $A \in O(2)$ has order 4.  
Since $A_i$ and $B_i$ commute with $r^2$, the fixed set of $r^2$ (which is equal to the fixed set of $r$) is $A_i$ and $B_i$ invariant, and 
the derivatives of $A_i$ and $B_i$ commute with this linear map  $\left( \begin{smallmatrix} A & 0 \\ 0 & 1\end{smallmatrix} \right)$.  However,  it is easily checked that the centralizer of such a map is abelian.  Thus, we conclude that $r$ cannot be written as a product of commutators. 

Now we adapt this line of argument to the general case.  
Let $\alpha$ and $\beta$ be generators of the two $\pi_1(S^1)$ factors, respectively, and suppose that $\alpha^k\beta^m$ represents a simple curve which bounds a disc in $M$  (in particular $\alpha$ and $\beta$ are relatively prime).  In order to apply the same proof strategy as above, we need to find a finite order element $r$ of $G' \times G'$ which is conjugate to a rotation in the direction of $\alpha^k\beta^m$, and can be written as a product of commutators  $r= [A_1, B_1]...[A_k, B_k]$ in $G' \times G'$ such that some nontrivial power of $r$ has order at least 3 and commutes with each $A_i$ and $B_i$.  In this case, the end of the argument above applies verbatim.   Thus, the remainder of the proof is devoted to producing $r$, again by using tricks lifting to factors in each cover.  

Again, to do this, we may work with the non-smooth version of Thompson's group.   Let $g$ be a finite order element of order $|2m|$ and rotation number $\frac{1}{2m}$. Such an element may be constructed in the standard non-smooth Thompson's group by partitioning $S^1$ into $4m$ intervals of rational dyadic lengths and sending each one to the next in cyclic order if $m>0$ and reverse cyclic order if $m<0$.  We may additionally choose this partition so that $g^m$ is a rigid rotation of order 2.    Similarly, let $f$ be a finite order element of order $|2k|$ and rotation number $\frac{1}{2k}$, with $f^k$ a rigid rotation of order 2.   

Using the fact that $G'$ is perfect, write $g = [a_1, b_1]...[a_j, b_j]$ and $f = [c_1, d_1] ... [c_j, d_j]$, thus $(g, f)$ is the product of commutators $(g,f) = \prod_{i=1}^j [(a_i, c_i), (b_i, d_i)]$. Note that we may choose some of these commutators to be trivial in order to ensure that the expressions have the same length.  
Similar to the previous proof, consider now a degree $4$ cover $S^1 \to S^1$ that is a local isometry, and take lifts $A_i$ and $B_i$ of $a_i$ and $b_i$ to homeomorphisms of the cover; these will lie in Thompson's group, will commute with the deck transformation, and will satisfy that the product of commutators
\[ s:=  \prod_{i=1}^j[A_i, B_i]  \] 
has rotation number $\frac{1}{8m}$.  Also, since $s^m$ is a lift of the order 2 rigid rotation $g^m$, we have that $s^m$ is a rigid rotation of order $8$ with $s^{2m}$ equal to the deck transformation of the cover.  

In the same way, we may choose lifts $C_i$ and $D_i$ of $c_i$ and $d_i$ to a degree $4$ cover, and we have that 
\[ t:=  \prod_{i=1}^j[C_i, D_i]  \] 
has rotation number $\frac{1}{8k}$, that $t^k$ is a rigid rotation of order $8$, and $t^{2k}$ commutes with $C_i$ and $D_i$.  

Now we return to considering the product action of the product of two copies of Thompson's group on $S^1 \times S^1$.  
Let 
\[ r := (s, t) = \prod_{i=1}^j [(A_i, C_i), (B_i, D_i)] \] 
By construction, each of $(A_i, C_i)$ and $(B_i, D_i)$ commutes with $r^{2km}$, and $r$ is conjugate to a homeomorphism which preserves each $S^1$ factor, rotating the first by $\frac{1}{8m}$ and the second by $\frac{1}{8k}$; in other words, it has order $8km$ and rotates in the direction of $\alpha^k \beta^m$.  
Furthermore, $r^{2km}$ has order 4, which is the final property that we needed to show.   
One may now consider smooth conjugates of these elements and proceed as in the first case where one factor bounded a disc.  
\end{proof}



Since $G'$ has elements of arbitrarily high order, combining the above propositions yields \Cref{Thompson}.  

\begin{cor}
$G' \times G'$ is a finitely generated group with an action on $S^1 \times S^1$ which does not extend to an action by $C^1$ diffeomorphisms on any three manifold bounded by $S^1 \times S^1$.    With the exception of the case where the bounding manifolds is the solid torus, the action furthermore does not extend to an action by $C^0$ homeomorphisms.  
\end{cor}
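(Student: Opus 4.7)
The plan is to combine the three ingredients already assembled in this section: the geometric rigidity of \Cref{torsion}, the obstruction on the solid torus from \Cref{prop:solid torus}, and the $C^0$ approximation observation in \Cref{rem:homeo}. Finite generation of $G'$ is standard (Ghys--Sergiescu), so the substantive task is only to produce the non-extension statements.

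First, I would verify that $G'$ satisfies the torsion hypothesis of \Cref{torsion}, i.e.\ that it contains elements of arbitrarily high finite order. This is immediate from the combinatorial model: the rigid rotation of order $2^k$ cyclically permuting the $2^k$ standard dyadic intervals of $S^1$ lies in Thompson's group $T$, and since $G'$ is obtained by smooth conjugation, its image contains an element of order $2^k$ for every $k$. Consequently, both $G_1 = G_2 = G'$ satisfy the hypothesis of \Cref{torsion} on torsion of arbitrarily high order.

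Next, the $C^1$ non-extension statement follows in two lines: assuming for contradiction a $C^1$ extension to some $M$ with $\partial M = S^1\times S^1$, \Cref{torsion} forces $M$ to be diffeomorphic to the solid torus, and then \Cref{prop:solid torus} rules out a $C^1$ extension of the product action of $G'\times G'$ on the solid torus. Note that \Cref{prop:solid torus} is stated without assuming either $S^1$ factor bounds a disc, so the identification of $\partial M$ with $S^1\times S^1$ in the hypothesis of the corollary creates no difficulty.

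For the $C^0$ statement (with $M$ not a solid torus), the plan is to re-run the proof of \Cref{torsion} with \Cref{rem:homeo} substituted for \Cref{prop:isometry}. The point is that the proof of \Cref{torsion} already begins by invoking Pardon's theorem to replace the action of each finite cyclic subgroup $\langle (f,1)\rangle$ or $\langle (1,g)\rangle$ by a diffeomorphism action conjugate on the boundary; this step is already purely $C^0$ as input. Combined with the equivariant sphere theorem, which likewise accepts a smooth finite cyclic action as input, and with \Cref{rem:homeo} in place of \Cref{prop:isometry}, the same argument shows that a $C^0$ extension forces $M$ to be a solid torus, contradicting our assumption.

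The main obstacle, such as it is, lies in making this last step rigorous: one must be sure that the proof of \Cref{torsion} is compatible with $C^0$ input, which it is precisely because the argument dismantles the group action into finite cyclic pieces before ever using smoothness. Beyond this check, no new analysis is needed.
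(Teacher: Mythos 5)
Your proposal is correct and is essentially the paper's own (implicit) argument: the paper obtains the corollary by exactly this combination, noting in one line that $G'$ has elements of arbitrarily high order, using \Cref{torsion} to force any bounding manifold to be a solid torus (its proof already runs on $C^0$ input, since it first smooths each finite cyclic subgroup via Pardon's theorem before invoking the equivariant sphere theorem and \Cref{prop:isometry}, which is why the homeomorphism statement holds off the solid torus), and using \Cref{prop:solid torus} to exclude $C^1$ extensions to the solid torus. Your observations about the order-$2^k$ rotations in $G'$ and the $C^0$-compatibility of the proof of \Cref{torsion} are precisely the checks the paper relies on.
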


\section{On powers of the Euler class}

Let $M$ be an irreducible $3$-manifold with boundary $\partial M\cong T^2$.   In this section we prove \Cref{main1}, showing that in the case where $M$ is not homeomorphic to the solid torus, there is a cohomological obstruction to extending groups acting on $\partial M$ to actions on $M$.
The advantage of cohomological obstruction in low dimensions is that it is insensitive to the regularity of the action, so we do not have to appeal to smoothing results to approximate $C^0$ actions by differentiable ones.  

Consider the map between classifying spaces induced by the restriction map
\[
\BDiff(M, \partial_0)\to \BDiff_0(T^2)\simeq \mathrm{B}T^2.
\]
\Cref{main1} is a consequence of the following proposition.
\begin{prop}\label{prop}
Let $M$ be an irreducible  $3$-manifold with boundary $\partial M\cong T^2$ such that it is not diffeomorphic to the solid torus. Then there exists an  integer $k$ such that the map induced by the restriction map
\[
H^k(\BDiff_0(T^2);\bQ)\to H^k(\BDiff(M, \partial_0); \bQ) 
\]
has a nontrivial kernel. 
\end{prop}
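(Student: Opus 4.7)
The plan is to reduce this cohomological statement to a $\pi_1$-level question about the diffeomorphism group of $M$, and then to combine the Hatcher--McCullough solution of Kontsevich's conjecture with the classification of smooth $T^2$-actions on compact orientable $3$-manifolds. This will give the proposition with $k=2$.

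Consider the fibration
\[
\BDiff(M,\partial M)\longrightarrow\BDiff(M,\partial_0)\stackrel{\rho}{\longrightarrow}\BDiff_0(T^2)\simeq BT^2
\]
induced by boundary restriction. Since $BT^2\simeq K(\bZ^2,2)$, we have $H^2(BT^2;\bQ)\cong\mathrm{Hom}(\pi_2 BT^2,\bQ)\cong\bQ^2$, and the kernel of $\rho^{\ast}$ in rational degree $2$ cohomology has dimension $2-\mathrm{rank}_{\bQ}\mathrm{im}(\rho_{\ast})$, where $\rho_{\ast}\colon\pi_2\BDiff(M,\partial_0)\to\bZ^2$ is the induced map on $\pi_2$. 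The identity component of $\Diff(M,\partial_0)$ coincides with $\Diff_0(M)$ (any isotopy in $\Diff(M,\partial_0)$ is an isotopy in $\Diff(M)$), so the problem reduces to showing that the restriction map
\[
r_{\ast}\colon\pi_1\Diff_0(M)\longrightarrow\pi_1\Diff_0(T^2)=\bZ^2
\]
has image of $\bQ$-rank at most one.

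The hypotheses on $M$ (irreducible, torus boundary, not a solid torus) ensure that $\partial M$ is incompressible and hence $M$ is Haken. By Hatcher's theorem on diffeomorphism groups of Haken $3$-manifolds, whose finiteness properties are underwritten by the Hatcher--McCullough resolution of Kontsevich's conjecture, $\Diff_0(M,\partial M)$ is contractible, with the exceptional case of the twisted $I$-bundle over the Klein bottle handled directly since it is Seifert fibered with explicitly understood diffeomorphism group. The resulting short exact sequence of topological groups
\[
1\to\Diff_0(M,\partial M)\to\Diff_0(M)\stackrel{r}{\to}T'\to 1,
\]
with $T'\subseteq T^2=\Diff_0(T^2)$ the image of $r$, is a principal bundle with contractible fiber, and thus produces a weak equivalence $\Diff_0(M)\simeq T'$. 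Consequently $\pi_1\Diff_0(M)\cong\pi_1 T'\hookrightarrow\bZ^2$, so it suffices to rule out $T'=T^2$. If $T'=T^2$, one chooses a section of $r$ (existing up to coherent homotopy because the fiber is contractible) and smooths it using the standard rectification results for continuous compact Lie group actions on $3$-manifolds to obtain a smooth effective $T^2$-action on $M$ whose orbits on $\partial M$ are the two standard circle factors. By the Orlik--Raymond classification, the only irreducible $3$-manifold with a single torus boundary admitting such an action is $D^2\times S^1$, contradicting the hypothesis. Therefore $\dim T'\leq 1$, $\pi_1 T'$ has rank at most $1$, and the proposition holds with $k=2$.

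\textbf{Main obstacle.} The most delicate step is extracting an honest smooth effective $T^2$-action on $M$ from the weak equivalence $\Diff_0(M)\simeq T^2$ in the case $T'=T^2$, so that the Orlik--Raymond classification can be invoked. A cleaner route is to integrate each of two $\bZ$-independent classes in $\pi_1\Diff_0(M)$ directly to a smooth circle subgroup of $\Diff_0(M)$ (using that $\pi_1$ of a topological group is abelian and that the kernel $\Diff_0(M,\partial M)$ is contractible), yielding two commuting smooth $S^1$-actions on $M$ that assemble into the required $T^2$-action.
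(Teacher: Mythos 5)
There is a genuine gap, and it occurs at the very first reduction. You assert that the kernel of $\rho^*$ on $H^2$ has dimension $2-\mathrm{rank}_{\bQ}\,\mathrm{im}(\rho_*)$, where $\rho_*\colon \pi_2\BDiff(M,\partial_0)\to\bZ^2$. Only one inclusion is true: if a class $\alpha\in H^2(\mathrm{B}T^2;\bQ)$ is nonzero on $\rho_*\pi_2$ then $\rho^*\alpha\neq 0$, so the kernel is \emph{contained} in the annihilator of the spherical image. The converse fails: a class that vanishes on all spherical classes can still be nonzero in $H^2(\BDiff(M,\partial_0);\bQ)$, detected on $\pi_0\Diff(M,\partial_0)=\mathrm{Mod}(M,\partial_0)$; in the Serre spectral sequence for $\BDiff_0(M,\partial_0)\to\BDiff(M,\partial_0)\to \mathrm{B}\mathrm{Mod}(M,\partial_0)$, vanishing on $\pi_2$ only means $\rho^*\alpha$ has positive filtration. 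A concrete counterexample to your claim that $k=2$ always works is $M=\Sigma_{g,1}\times S^1$ with $g\geq 2$: here $\Diff_0(M,\partial_0)\simeq S^1$ (the fiber rotation), so the spherical image has rank $1$ and your formula predicts a one-dimensional kernel in degree $2$; but the class $x_1$ corresponding to the $\partial\Sigma_{g,1}$ direction pulls back, via the subgroup $\Diff(\Sigma_{g,1},\partial_0)\times\{\mathrm{id}\}\subset \Diff(M,\partial_0)$, to the boundary Euler class $e\in H^2$ of the once-marked mapping class group, which is nonzero rationally — indeed $e^{g-1}\neq 0$, so the first degree in which the kernel appears grows with $g$. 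This is exactly why the paper's Theorem 1.3 requires injectivity of the whole polynomial algebra $\bQ[x_1,x_2]$ rather than just nonvanishing in degree $2$, and why the introduction emphasizes that the integer $n(M)$ depends on $M$. The paper's proof repairs precisely this deficit: after placing $r^*(x)$ in positive filtration, it shows a sufficiently high power $r^*(x^k)$ must vanish because $\mathrm{Mod}(M,\partial_0)$ is virtually cohomologically finite (finite in the hyperbolic case; via the Hatcher--McCullough finiteness results in the Seifert fibered case; and by reducing to the boundary piece of the JSJ decomposition in general), while $x^k\neq 0$ since $H^*(\BDiff_0(T^2);\bQ)$ has no nilpotents.

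Two secondary problems in the same vein: first, the image $T'$ of the restriction $\Diff_0(M)\to\Diff_0(T^2)$ is all of $\Diff_0(T^2)$ (extend any isotopy of the boundary over a collar), not a Lie subtorus, so the ``short exact sequence with $T'\subseteq T^2$'' and the weak equivalence $\Diff_0(M)\simeq T'$ are not correct as stated; the homotopy-theoretic content you want is the long exact sequence of the restriction fibration, whose fiber $\Diff(M,\,\text{rel }\partial_0)$ has contractible components but many components (boundary Dehn twists), so $\pi_1\Diff_0(M)$ is the kernel of $\bZ^2\to\mathrm{Mod}(M,\,\text{rel }\partial_0)$ — this is how the paper argues in its hyperbolic case. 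Second, the step you flag as the main obstacle — integrating a rank-two $\pi_1$ to a smooth effective $T^2$-action — is indeed not justified: elements of $\pi_1\Diff_0(M)$ are loops, not one-parameter subgroups, and neither ``choosing a section up to coherent homotopy'' nor smoothing rectifies this. It is also unnecessary: Hatcher's theorem (\cite[Theorem 2]{hatcher1999spaces}), which the paper cites, already gives that for Haken $M$ not a solid torus the identity component $\Diff_0(M,\partial_0)$ is contractible or homotopy equivalent to $S^1$, which yields your rank bound directly. But even with that bound granted, the argument cannot conclude in degree $2$ for the reason in the first paragraph, so the mapping-class-group finiteness input of the paper's proof is not optional.
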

\begin{proof}[Proof of \Cref{main1}] 
If the action of $H$ on the boundary extends to a $C^0$-action on $M$, then we have a homotopy commutative diagram between classifying spaces
\begin{displaymath}
    \xymatrix @M=3pt {
          & \BH(M, \partial_0) \ar[d]^{r} \\
        \mathrm{B}H \ar[r]_{\hspace{-.7cm}\rho} \ar[ur]^\phi  & \BH_0(T^2). }
\end{displaymath}
 It is a well-known fact that in dimensions smaller than $4$, the inclusion of  diffeomorphism groups into homeomorphism groups is a weak homotopy equivalence (for dimension $3$ see \cite{cerf1961topologie} which is based on Hatcher's proof \cite{hatcher1983proof} of Smale's conjecture and for dimension $2$ see \cite{hamstrom1974homotopy}). Hence, the  the induced map 
\[
\rho^*: H^*(\BH_0(T^2);\bQ)\to H^*(\mathrm{B}H; \bQ),
\]
would be injective in all degrees by the hypothesis. But by \Cref{prop} the map $r^*$ has a nontrivial kernel and since $\rho=r\circ \phi$, it implies that $\rho^*$ also has a nontrivial kernel which is a contradiction.
\end{proof}
\begin{proof}[Proof of \Cref{prop}]Recall that $M$ is an irreducible $3$-manifold with a single torus boundary component. Since $M$ is not diffeomorphic to a solid torus, its boundary is incompressible. For such manifolds, the JSJ decomposition (\cite{MR551744, MR539411}) gives a canonical set of disjoint embedded incompressible tori and incompressible annuli (possibly empty), so that cutting $M$ along those tori and annuli gives a decomposition of $M$ into pieces that are either Seifert fibered, $I$-bundles over surfaces of negative Euler characteristic, or admit a hyperbolic structure on the interior (see also \cite[Section 4]{MR1486644}).
When $M$ has only torus boundary components - the case of interest to us - no annuli are needed in the decomposition (see \cite[Section 5]{neumann1996notes}) and the piece with torus boundary is either Seifert fibered or has hyperbolic interior.  

We consider three cases depending on whether the JSJ decomposition is trivial, and if trivial, depending on the structure of the piece with the torus boundary.

{\bf Case 1: $M\backslash T$ is hyperbolic.} Here and in what follows we use the notation $\text{Mod}(M)$ to denote the {\em mapping class group} $\pi_0(\Diff(M))$.  
 Then $\text{Mod}(M)$ is isomorphic to the group of isometries $\text{Isom}(M\backslash T)$ which is a finite group.



Recall that $\Diff(M, \partial_0)$ denotes the subgroup of $\Diff(M)$ that restricts to diffeomorphisms of the torus boundary $T^2$ that are isotopic to the identity and let $\text{Mod}(M, \partial_0)$ be the corresponding mapping class group. Since $\text{Mod}(M, \partial_0)$ is a subgroup of a finite group, it is also finite. 
 
 \begin{claim*}
 $\pi_i(\Diff(M,  \partial_0))=0$ for $i>0$.
 \end{claim*}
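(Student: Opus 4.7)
The plan is to analyze $\Diff(M,\partial_0)$ via the restriction map to $\Diff_0(T^2)$ and feed in the Hatcher--McCullough solution of Kontsevich's conjecture on the fiber. Concretely, I would consider the restriction homomorphism $r : \Diff(M,\partial_0) \to \Diff_0(T^2)$. This is surjective because every isotopy of $T^2$ to the identity can be extended to $M$ via a collar cutoff, and by standard fibration theorems for diffeomorphism groups (Palais, Cerf), $r$ is a Serre fibration whose fiber is $\Diff(M,\partial)$, the group of diffeomorphisms fixing $\partial M$ pointwise.

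Next I would cite two classical inputs. First, the Hatcher--McCullough theorem \cite{MR1486644} implies that for an irreducible $3$-manifold with nonempty boundary, each component of $\Diff(M,\partial)$ is contractible; in particular $\pi_i(\Diff(M,\partial))=0$ for all $i\geq 1$. Second, the Earle--Eells theorem gives $\Diff_0(T^2)\simeq T^2$, so that $\pi_1(\Diff_0(T^2))\cong \bZ^2$ and $\pi_i(\Diff_0(T^2))=0$ for $i\geq 2$. The long exact sequence of the fibration then immediately delivers $\pi_i(\Diff(M,\partial_0))=0$ for every $i\geq 2$, and in degree one reduces the claim to the injectivity of the connecting homomorphism
\[
\delta : \pi_1(\Diff_0(T^2)) \cong \bZ^2 \longrightarrow \pi_0(\Diff(M,\partial)).
\]

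The connecting map has a concrete description: a rotation loop $\phi_t$ in $\Diff_0(T^2)$ is ``spread out'' over a collar $T^2\times[0,1]$ of $\partial M$ via a bump function, and the resulting endpoint diffeomorphism (trivial on $\partial M$ and outside the collar) represents $\delta[\phi_t]$. Thus the two generators of $\pi_1(T^2)$ map to the two independent Dehn twists along a torus parallel to $\partial M$. Since $M$ is not a solid torus, its boundary torus is incompressible, and it is a classical fact for irreducible $3$-manifolds with incompressible torus boundary (and in our Case~1 an immediate consequence of the cusp structure of the hyperbolic interior $M\setminus T$) that these two Dehn twists generate a copy of $\bZ^2$ inside $\pi_0(\Diff(M,\partial))$. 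Consequently $\delta$ is injective and $\pi_1(\Diff(M,\partial_0))=0$.

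The main obstacle is verifying the injectivity of $\delta$: one has to argue, using the topology of $M$ in the relevant case, that the two boundary-parallel Dehn twists remain independent and of infinite order in the mapping class group relative to the boundary. Once this is in hand, the rest of the argument is a formal consequence of Hatcher--McCullough plus the known homotopy type of $\Diff_0(T^2)$.
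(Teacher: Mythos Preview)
Your approach is essentially the same as the paper's: the same fibration with fiber $\Diff(M,\text{rel }\partial)$, Hatcher's contractibility of components for Haken manifolds (the paper cites Hatcher directly rather than Hatcher--McCullough), and reduction to injectivity of the connecting map $\bZ^2\to\pi_0(\Diff(M,\text{rel }\partial))$ sending the rotation loops to boundary Dehn twists. The one place where the paper is more explicit is precisely your ``main obstacle'': rather than appealing to a classical fact or to the cusp structure, the paper fixes a basepoint $x\in T^2$, observes that the Dehn twists act on $\pi_1(M,x)$ as inner automorphisms by the corresponding elements of $\pi_1(T^2)\hookrightarrow\pi_1(M)$, and then uses that the hyperbolic group $\pi_1(M)$ has trivial center to conclude that the composite $\bZ^2\to\text{Mod}(M,\text{rel }\partial_0)\to\text{Aut}(\pi_1(M,x))$ is injective.
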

 \begin{proof}[Proof of the claim]There is a fibration 
 \begin{equation}\label{fib}
 \Diff(M,  \text{rel }\partial_0)\to \Diff(M,  \partial_0)\to \Diff_0(T^2),
 \end{equation}

 where $\Diff(M,  \text{rel }\partial_0)$ denotes the subgroup of $\Diff(M)$ that restricts to the identity on the torus boundary $T^2$. Since $M$ is Haken, Hatcher's theorem (\cite[Theorem 2]{hatcher1999spaces}) 
 implies that $\pi_i(\Diff(M,  \text{rel } \partial_0))=0$ for $i>0$. Hence, the long exact sequence of homotopy groups of the above fibration implies that $\pi_i(\Diff(M,  \text{rel } \partial_0))=\pi_i(\Diff(M,   \partial_0))$ for $i>1$. Hence, to finish the proof of the claim, we need to consider the case $i=1$.  In other words, we need to  show that the map 
 \[
 \bZ^2\to \text{Mod}(M,  \text{rel }\partial_0)
 \]
 in the long exact sequence of the homotopy groups of the fibration \ref{fib} is injective. 
 
The group $\pi_1(\Diff_0(T^2))=\bZ^2$ is generated by the $S^1$-actions along the meridian and the longitude of the torus boundary $T^2$ and their images in $\text{Mod}(M,  \text{rel }\partial_0)$ are realized by the Dehn twists in a collar neighborhood of $T^2$. Fix a base point $x\in T^2$, then the Dehn twists around $T^2$ act as inner automorphisms on $\pi_1(M, x)$. Given the hyperbolicity of $M$, the group $\pi_1(M, x)$ has no center. Therefore, the composition
 \[
 \bZ^2\to \text{Mod}(M,  \text{rel }\partial_0)\to \text{Aut}(\pi_1(M, x)),
 \]
 is injective. Hence, the first map has to be injective.
 \end{proof}
 The claim implies that $\Diff(M,  \partial_0)$ is homotopy equivalent to the finite group $\text{Mod}(M, \partial_0)$. Therefore, we have $\tilde{H}^*(\BDiff(M,  \partial_0);\bQ)=0$ which implies that the kernel of the map 
 \[
H^k(\BDiff_0(T^2);\bQ)\to H^k(\BDiff(M,  \partial_0); \bQ),
\]
is nontrivial. 


{\bf Case 2: $M$ is a Seifert fibered space.}  Let $M$ be a Seifert fibered manifold where $T^2$ is a torus boundary component. We may assume that the boundary of $M$ is union of non-singular fibers. Since $M$ is Haken and is not diffeomorphic to a solid torus, by Hatcher's theorem (\cite[Theorem 2]{hatcher1999spaces}), the identity component $\Diff_0(M, \partial_0)$ is either contractible or it has the homotopy type of $S^1$. 
First, let us assume that $\Diff_0(M,  \partial_0)\simeq *$. In this case, we have $\BDiff(M,\partial_0)\simeq \mathrm{B}\text{Mod}(M,  \partial_0)$. Since $H^*(\BDiff_0(T^2);\bQ)$ is a polynomial algebra over $\bQ$, if we show that $\text{Mod}(M,  \partial_0)$ is virtually cohomologically finite i.e. $H^*(\mathrm{B}\text{Mod}(M,  \partial_0);\bQ)$ vanishes above some degree, we can conclude that the map on rational cohomology induced by the restriction map 
\[
\mathrm{B}\text{Mod}(M,  \partial_0)\to \BDiff_0(T^2),
\]
has a nontrivial kernel. Hence it is enough to show that the mapping class group is virtually cohomologically finite.

The mapping class group of Seifert fibered spaces are well understood.  Except few exceptional cases, the Seifert fibered structure is unique and for these cases the mapping class group is isomorphic to the fiber-preserving mapping class group (\cite[Proposition 25.2 and Proposition 25.3]{MR551744}, see also \cite[Theorem 1]{preaux2012survey}). The only manifold with one torus boundary component among the exceptional cases is the solid torus which is excluded by the hypothesis. 

Hence, in our cases that mapping class group is isomorphic to the fiber-preserving mapping class group, and this group sits in a short exact sequence between ``vertical" and ``horizontal" mapping class elements as follows. Let $M$ be fibered over a surface $\Sigma$ with the projection $p:M\to \Sigma$ and let $S$ be the set of projections singular fibers to $\Sigma$. Let $\Diff^*(\Sigma, S)$ be the subgroup of $\Diff(\Sigma)$ that permute the points in $S$ with the same index of the corresponding singular fibers of Seifert fibered space structure. Let $\text{Mod}^*(\Sigma, S)$ be its group of connected components. Similar to \cite[Lemma 2.2]{MR1486644}, we have a short exact sequence
\[
1\to H_1(\Sigma, \partial \Sigma)\to \text{Mod}(M,  \partial_0)\to \text{Mod}^*(\Sigma, S)\to 1.
\]

Let $\Sigma_0$ be the surface obtained from $\Sigma$ by cutting out a neighborhood of $S$. Since $\text{Mod}^*(\Sigma, S)$ is finite index subgroup of $\text{Mod}(\Sigma_0)$ which is virtually cohomologically finite, so is $\text{Mod}^*(\Sigma, S)$. Moreover, the abelian group $H_1(\Sigma, \partial \Sigma)$ is also virtually cohomologically finite. Therefore, by \cite[Lemma 1.1]{MR1486644}, we have $\text{Mod}(M, \partial_0)$ is also virtually cohomologically finite. 

Now suppose $\Diff_0(M, \partial_0)\simeq S^1$. Now we have a homotopy commutative diagram
\begin{equation}
\begin{gathered}
\begin{tikzpicture}[node distance=1.5cm, auto]
  \node (A) {$\mathrm{B}S^1$};
  
  \node (C) [right of=A, node distance=3cm]{$\BDiff(M,\partial_0)$};
  \node (B) [ below of=C, node distance=1.5cm]{$\BDiff_0(T^2),$};
  \node (D) [right of=C, node distance=3cm]{$\mathrm{B}\text{Mod}(M, \partial_0)$};
  \draw [->] (A) to node {$\iota$} (C);
    \draw [<-] (B) to node {$r$} (C);
  \draw [->] (A) to node {$f$} (B);
  \draw [->] (C) to node {} (D);
 \end{tikzpicture}
 \end{gathered}
\end{equation}
where the horizontal maps give a fibration induced by $$\Diff_0(M,\partial_0)\to \Diff(M,\partial_0)\to \text{Mod}(M, \partial_0).$$

Note that $f^*: H^2(\BDiff_0(T^2);\bQ)\to H^2(\mathrm{B}S^1;\bQ)$ has a nontrivial kernel, say $x$ is a nontrivial element in the kernel. Therefore, $r^*(x)$ is in the kernel of $\iota^*$. Consider  the fibration $$\mathrm{B}S^1\to \BDiff(M,\partial_0)\to \mathrm{B}\text{Mod}(M, \partial_0).$$ Recall the filtration on the cohomology of the total space of a fibration $f\colon E\to B$ that gives rise to the Serre spectral sequence is induced by the pre-image of the skeleton filtration of $B$, and the filtration terms are given by $F_pH^n(E):= \ker(H^n(E)\to H^n(f^{-1}(\text{skl}_{p-1}B)))$. Hence, the first term of filtration on the second cohomology $H^2(\BDiff(M,\partial_0);\bQ)$ 
is given by $\ker(H^2(\BDiff(M,\partial_0))\xrightarrow{\iota} H^2(\mathrm{B}S^1))$. Since $r^*(x)$ lies in the kernel of $\iota^*$, it implies that $r^*(x)$ has a positive filtration in the Serre spectral sequence. Therefore, for some power $k$, we know that $r^*(x^k)$ has a Serre filtration beyond the $\bQ$-cohomological dimension of $\text{Mod}(M,\partial_0)$.  Thus, for some $k$, the class $r^*(x^k)$ has to vanish in rational cohomology. However, since $H^*(\BDiff_0(T^2);\bQ)$ has no nilpotent element, $x^k$ is a nontrivial element in the kernel of $r^*$.

{\bf Case 3: $M$ has a nontrivial JSJ decomposition.} In this case Hatcher's theorem \cite[Theorem 2]{hatcher1999spaces}
states that the identity component of $\Diff(M, \partial_0)$ is contractible. Let $T_P$ be the union of  tori in the JSJ decomposition that cuts out the unique piece $P$ containing the boundary torus $T^2$. 

Let $\Diff(M, T_P, \partial_0)$ denote the subgroup of $\Diff(M,\partial_0)$ that preserve $T_P$. From Hatcher's theorem \cite[Theorem 1]{hatcher1999spaces} on the homotopy type of spaces of embeddings of incompressible surfaces in a Haken manifold, for each component $T_0$ of $T_p$, we have  $\text{Emb}_{T_0}(T^2, M)\simeq T^2$ where $\text{Emb}_{T_0}(T^2, M)$ is the space of embeddings of tori isotopic to $T_0$.  As is also explained in \cite[Page 107]{MR1486644}, from the uniqueness of the JSJ decomposition and Hatcher's theorem, we conclude that the map 
\[
\text{Mod}(M, T_P, \partial_0)\to\text{Mod}(M, \partial_0),
\]
is an isomorphism.  Hence, given the homotopy commutative diagram

\begin{displaymath}
    \xymatrix @M=3pt {
          & \mathrm{B}\text{Mod}(P, \partial_0) \ar[d] \\
        \mathrm{B}\text{Mod}(M, T_P, \partial_0) \ar[r] \ar[ur]  & \BDiff_0(T^2), }
\end{displaymath}
we deduce that the map 
\[
r^*:H^*(\BDiff_0(T^2);\bQ)\to H^*(\mathrm{B}\text{Mod}(M, T_P, \partial_0);\bQ),
\]
factors through 
\[
r^*_P:H^*(\BDiff_0(T^2);\bQ)\to H^*(\mathrm{B}\text{Mod}(P, \partial_0);\bQ).
\]
But by previous cases $r^*_P$ has a nontrivial kernel. Therefore, $r^*$ also has a nontrivial kernel. 
 \end{proof}

\subsection*{Application to the extension or bordism problem}
Let $G \subset \Homeo_0(S^1)$ be Thompson's group.  (Altrnatively, one could work with its smooth conjugate $G'$, as described in the introduction).   
Here show there is a cohomological obstruction to extending the  product action of $G \times G$ on $S^1 \times S^1$ to an irreducible manifold $M$ with torus boundary.     

To do so, we recall what is known about the cohomology of $G$.  Ghys and Sergiescu (\cite[Theorem B]{MR896095}) used a theorem of Greenberg (\cite{MR906823}) to prove that there exists a map
 \[
 \mathrm{B}G\to \text{Map}(S^1, S^3)\hcoker S^1,
 \]
 which induces a homology isomorphism where $\text{Map}(S^1, S^3)\hcoker S^1$ is the homotopy quotient \footnote{For a topological group $G$ acting on a topological space $X$, the homotopy quotient is denoted by $X\hcoker G$ and is given by $X\times_G \mathrm{E}G$ where $\mathrm{E}G$ is a contractible space on which $G$ acts freely and properly discontinuously.} of the circle action on the space of loops on $S^3$. Hence, they conclude that 
 \[
 H^*(\mathrm{B}G;\bZ)\cong \bZ[\alpha, \chi]/\alpha\cdot \chi ,
 \]
 where $\chi$ is the Euler class induced by the inclusion $G\hookrightarrow \tH(S^1)$ and $\alpha$ is also a degree $2$ class which is a ``PL version" of Godbillon-Vey class. From this computation, what we need is $H_1(\mathrm{B}G;\bZ)=0$ so $G$ is perfect and the powers of the Euler class $\chi^k\in H^{2k}(\mathrm{B}G;\bZ)$ are non-zero for all $k$. Therefore, by \Cref{prop}, the action of $G\times G$ on the boundary does not extend to a $C^0$-action on $M$, unless $M$ is the solid torus.
 
\subsection*{Discussion: reducible case}
One approach to proving the same statement as in \Cref{prop} for a reducible manifold $M$ with a torus boundary component would be to generalize the solution of Kontsevich's conjecture by Hatcher and McCullough \cite{MR1486644} for reducible $3$-manifolds. For such $M$, in the previous paper (\cite[Theorem 1.2]{MannNariman}) we proved that when $M$ is not diffeomorphic to the solid torus, the map
 \[
H^2(\BDiff_0(T^2);\bQ)\to H^2(\BDiff_0(M, \partial_0); \bQ),
\]
has a nontrivial kernel. Let $x\in H^2(\BDiff_0(T^2);\bQ)$ be a nontrivial element in the kernel. Now, consider the homotopy commutative diagram 
\begin{equation}
\begin{gathered}
\begin{tikzpicture}[node distance=1.5cm, auto]
  \node (A) {$\BDiff_0(M, \partial_0)$};
  
  \node (C) [right of=A, node distance=3cm]{$\BDiff(M,\partial_0)$};
  \node (B) [ below of=C, node distance=1.5cm]{$\BDiff_0(T^2).$};
  \node (D) [right of=C, node distance=3cm]{$\mathrm{B}\text{Mod}(M, \partial_0)$};
  \draw [->] (A) to node {$\iota$} (C);
    \draw [<-] (B) to node {$r$} (C);
  \draw [->] (A) to node {$f$} (B);
  \draw [->] (C) to node {} (D);
 \end{tikzpicture}
 \end{gathered}
\end{equation}
Note that $r^*(x)$ has a positive Serre filtration in the Serre spectral sequence for the fibration $\BDiff_0(M, \partial_0)\to \BDiff(M, \partial_0)\to\mathrm{B}\text{Mod}(M, \partial_0)$.  If we knew that $\text{Mod}(M, \partial_0)$ were virtually cohomologically finite, similar to the case $2$ in the proof of \Cref{prop}, we could argue that for some integer $k$, the class $r^*(x^k)$ has to be zero which implies that the nontrivial class $x^k\in H^{2k}(\BDiff_0(T^2);\bQ)$ is in the kernel of the map 
\[
H^*(\BDiff_0(T^2);\bQ)\to H^*(\BDiff(M, \partial_0); \bQ).
\]
Therefore, this discussion will leave us with the following question.
\begin{problem}
Let $M$ be a reducible $3$-manifold such that $\partial M\cong T^2$. Is $\text{\textnormal{Mod}}(M, \partial_0)$ virtually cohomologically finite?
\end{problem}
Nonetheless, when $M$ is the connected sum of only two irreducible $3$-manifolds, one can prove the following
\begin{prop}
Let $P$ be an irreducible $3$-manifold such that $\partial P\cong T^2$ and $Q$ is a closed irreducible $3$-manifold. Let $M$ be $P\#Q$. Then the map 
\[
H^*(\BDiff_0(T^2);\bQ)\to H^*(\BDiff(M, \partial_0); \bQ),
\]
has a nontrivial kernel.
\end{prop}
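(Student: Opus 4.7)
The plan is to bootstrap the degree-$2$ obstruction of \cite[Theorem 1.2]{MannNariman} to arbitrarily high degrees via the Serre-spectral-sequence mechanism already used in Case 2 of the proof of \Cref{prop}. Let $x\in H^2(\BDiff_0(T^2);\bQ)$ be a nonzero class lying in the kernel of $f^*\colon H^2(\BDiff_0(T^2);\bQ)\to H^2(\BDiff_0(M,\partial_0);\bQ)$ provided by \cite[Theorem 1.2]{MannNariman}. Using the homotopy commutative diagram of the discussion preceding the proposition, the relation $f^*=\iota^*\circ r^*$ forces $r^*(x)$ to vanish on the fiber of $\iota\colon \BDiff_0(M,\partial_0)\to\BDiff(M,\partial_0)\to\mathrm{B}\mathrm{Mod}(M,\partial_0)$, hence to have positive Serre filtration. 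By multiplicativity, $r^*(x^k)$ has Serre filtration at least $k$, so once $k$ exceeds the rational cohomological dimension of $\mathrm{Mod}(M,\partial_0)$ the class $r^*(x^k)$ must vanish; since $x^k\neq 0$ in the polynomial algebra $H^*(\BDiff_0(T^2);\bQ)\cong \bQ[x_1,x_2]$, this furnishes the desired kernel element in degree $2k$.

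As flagged in the discussion, the proposition thus reduces to showing that $\mathrm{Mod}(P\# Q,\partial_0)$ is virtually $\bQ$-cohomologically finite. The plan here is to exploit the uniqueness up to isotopy of the separating $2$-sphere $S$ realizing the prime decomposition: pass to the finite-index subgroup $\Gamma\le\mathrm{Mod}(M,\partial_0)$ preserving $[S]$, and cut along $S$ to obtain a homomorphism from $\Gamma$ into $\mathrm{Mod}(P',\partial_0\cup\partial_S)\times\mathrm{Mod}(Q',\partial_S)$, where $P'$ and $Q'$ are the complements of open $3$-balls in $P$ and $Q$. The kernel of this homomorphism is the sphere-twist subgroup, generated by the Dehn twist along $S$ and of order at most two, hence finite. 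Virtual $\bQ$-cohomological finiteness of $\Gamma$ therefore reduces to the same statement for each factor, which, via the capping-off fibration with base Hatcher's space of embedded $3$-balls (homotopy equivalent to the ambient frame bundle), reduces further to virtual $\bQ$-cohomological finiteness of $\mathrm{Mod}(P,\partial_0)$ and $\mathrm{Mod}(Q)$. The former is handled by Cases 1--3 of \Cref{prop} applied to $P$; the latter follows from the classical finiteness results for mapping class groups of closed irreducible $3$-manifolds, which are finite for hyperbolic $Q$ by Mostow rigidity and virtually $\bQ$-cohomologically finite in the Seifert-fibered case by the same analysis as Case 2 of \Cref{prop}.

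The main obstacle I anticipate is the careful bookkeeping at the cutting and capping-off steps: pinning down the sphere-twist kernel, and verifying that the capping-off fibration preserves virtual $\bQ$-cohomological finiteness. No fundamentally new geometric-topological input is needed beyond the Hatcher and Hatcher--McCullough results already invoked in the proof of \Cref{prop}; the challenge is rather the assembly of these pieces so that no infinite-dimensional rational cohomology can leak in through either prime factor or through the sphere twist.
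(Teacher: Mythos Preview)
Your route is genuinely different from the paper's, and it carries a real gap.  The paper never tries to establish virtual $\bQ$-cohomological finiteness of $\mathrm{Mod}(P\#Q,\partial_0)$.  Instead it invokes a theorem of Hatcher and Jahren to the effect that, for the (unique up to isotopy) separating sphere $S$, the inclusion $\Diff(M,S,\partial_0)\hookrightarrow\Diff(M,\partial_0)$ is a weak homotopy equivalence.  This immediately factors the restriction map $\BDiff(M,\partial_0)\to\BDiff_0(T^2)$ through $\BDiff(P\setminus D^3,\partial_0)\to\BDiff_0(T^2)$, and the latter map already has nontrivial rational kernel by the irreducible \Cref{prop} together with \cite[Lemma~3.13]{MannNariman} (which also handles the case $P\cong D^2\times S^1$).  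No cohomological bound on any reducible mapping class group is needed.

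By contrast, what you propose is precisely the conditional strategy the paper spells out in the discussion and then records as an open \emph{Problem}.  Your sketch of virtual $\bQ$-cohomological finiteness has two soft spots.  First, your appeal to ``Cases 1--3 of \Cref{prop} applied to $P$'' overreaches: Case~3 (nontrivial JSJ) does \emph{not} bound the rational cohomological dimension of $\mathrm{Mod}(P,\partial_0)$; it only factors the restriction map through $\mathrm{Mod}$ of the boundary JSJ piece, which suffices for a kernel statement but not for finiteness of the whole group.  Second, for the closed factor $Q$ you handle only the hyperbolic and Seifert-fibered cases, but a closed irreducible $3$-manifold may also be a solvmanifold or have a nontrivial JSJ decomposition, and you say nothing about $\mathrm{Mod}(Q)$ in those cases.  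These gaps may well be fillable with more work, but the paper's factoring argument sidesteps the entire issue in a couple of lines.
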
 
\begin{proof}
Let $S\subset M$ be a separating sphere. Since there is only separating sphere in $M$ up to isotopy, by a theorem of Hatcher \cite[Remark page 430]{MR624946} and Jahren \cite{MR2626194}, we know that $\Diff(M, S,  \partial_0)$ which is the subgroup of those diffeomorphisms that preserve the sphere $S$ setwise, is homotopy equivalent to $\Diff(M, \partial_0)$. Therefore, we have the following homotopy commutative diagram
\begin{displaymath}
    \xymatrix @M=3pt {
          & \BDiff(P\backslash D^3, \partial_0) \ar[d] \\
        \BDiff(M, S, \partial_0) \ar[r] \ar[ur]  & \BDiff_0(T^2).}
\end{displaymath}
Hence, it is enough to show that 
\[
H^*(\BDiff_0(T^2);\bQ)\to H^*(\BDiff(P\backslash D^3, \partial_0); \bQ),
\]
have a nontrivial kernel. But this follows exactly similar to the proof of \cite[Lemma 3.13]{MannNariman} by considering the cases whether $P$ is the solid torus and using \Cref{prop} for the irreducible case.
\end{proof}
Hence, we pose the general case as a question.
 \begin{problem}
 Let $M$ be a $3$-manifold with boundary $\partial M\cong T^2$ such that it is not diffeomorphic to the solid torus. Does the restriction map
\[
H^*(\BDiff_0(T^2);\bQ)\to H^*(\BDiff(M, \partial_0); \bQ),
\]
have a nontrivial kernel?
 \end{problem}

%
%
%
%
%
%

\bibliographystyle{alpha}
\bibliography{referenc}
\end{document}